\theoremstyle{plain}
\newtheorem{Theorem}{Theorem}[section]
\newtheorem{Corollary}[Theorem]{Corollary}
\newtheorem{Lemma}[Theorem]{Lemma}
\newtheorem{Proposition}[Theorem]{Proposition}
\newtheorem{Conjecure}[Theorem]{Conjecture}
\newtheorem{Question}[Theorem]{Question}
\theoremstyle{definition}
\newtheorem{Definition}[Theorem]{Definition}
\newtheorem{Example}[Theorem]{Example}
\theoremstyle{Remark}
\newtheorem{Remark}{Remark}
\newcommand{\lar}{\longrightarrow}
\def\fm{{\mathfrak m}}
\def\fp{{\mathfrak p}}
\def\fq{{\mathfrak q}}
\def\hht{{\rm ht}\,}
\def\hom{\mbox{\rm Hom}}
\def\proj#1{{\rm Proj}\, (#1)}
\def\pp{{\mathbb P}}
\def\AA{{\mathbb A}}
\begin{document}

\title[Hypersurfaces with linear type singular loci]{Hypersurfaces with linear type singular loci }
\author[A. Nasrollah Nejad, A.B. Farrahy  ]{Amir Behzad Farrahy, \ abbas Nasrollah nejad   }
\address{department of mathematics,  institute for advanced studies in basic sciences (IASBS) p.o.box 45195-1159  zanjan, iran.
}
\email{farrahy@iasbsa.ac.ir, abbasnn@iasbs.ac.ir}

\subjclass[2010]{primary 14J70, 14Q10, 14H20; secondary  13A30, 14M10, 13H10}
\keywords{ Hypersurface, Blowup algebra, Jacobian Ideal, Isolated singularity}
\begin{abstract}
 In this paper, necessary and sufficient criteria for the Jacobian ideal of a reduced hypersurface with isolated singularity to be of linear type, are presented.  We prove that the gradient  ideal of a reduced projective plane curve with simple singularities ($\mathrm{ADE}$) is of linear type. We show that any reduced projective quartic curve is of gradient linear type.
 \end{abstract}
\maketitle
\section*{Introduction}
Let $R=k[x_1,\ldots,x_n]$ be a polynomial ring over an algebraically closed field $k$ of characteristic zero and  $f\in R$ a reduced polynomial defining a hypersurface $X=V(f)$. The singular subscheme of $X$ is defined by the {\it Jacobian ideal} $I(f)=(f,J(f))$, where $J(f)=(\partial f/\partial x_1,\ldots, \partial f/\partial x_n)$ is the \textit{gradient ideal} of $f$. This ideal has been mainly treated in its local or homogeneous version, largely profiting from the prevailing structure and theoretical simplification ({\rm cf.}\cite{CRA,ASimis,ss}).

An ideal $I$ in a commutative ring $A$ is called of \textit{linear type} if the symmetric algebra  $\mathrm{Sym}_I(A)$ of $I$ is isomorphic to the corresponding Rees algebra $\mathcal{R}_I(A)$. We say that a hypersurface $X=V(f)$ is of {\it Jacobian linear type } if the Jacobian ideal $I(f)$ is of linear type. If $X$ is of Jacobian linear type, then the blowup of the affine space $\mathbb{A}_k^n$ along its singular subscheme  $\proj{\mathcal{R}_{R}(I(f))}$ is the naive blowup $\proj{\mathrm{Sym}_{R}(I(f))}$. The basic notion which motivated this work is the {\it Aluffi algebra}.  This algebra is an algebraic definition of a characteristic cycle of a hypersurface in intersection theory.  Let $J\subset I$ be a pair of ideals in the ring $A$. The Aluffi algebra is defined by 
\[\mathcal{A}_{A/J}(I/J)={\rm Sym}_{A/I}(I/J)\otimes_{{\rm Sym}_A(I)}\mathcal{R}_A(I). \]
P. Aluffi used this algebra  for computing characteristic cycle of a hypersurface parallel to conormal cycle in intersection theory. In order to have expected behavior of the Aluffi algebra  over a regular ring, we had better to assume that the ideal $J$  is principal~\cite[Theorem 1.8 and Proposition 2.11]{AA}. If the ideal $I$ is of linear type, then the Aluffi algebra is isomorphic to the symmetric algebra. Hence  
\[\proj{\mathcal{A}_{A/J}(I/J)}=\proj{{\rm Sym}_{A/I}(I/J)}.\]
If the reduced hypersurface $X=V(f)$ is of Jacobian linear type, then  the characteristic cycle [{\rm Ch}(X)] of $X$ is equal $(-1)^{\dim X}\left[\proj{{\rm Sym}_{R/(f)}(I(f)/(f)}\right]$\cite[Theorem 3.2]{aluffi}.  In view of the results of Aluffi and of the application to characteristic cycle, it is natural to pose the problem of{ \it characterizing Jacobian linear type hypersurface}.  The results and examples in \cite{Abbas} hint that the answer to this problem may not be  simple in general. 

In this paper, we focus on the hypersurfaces with only isolated singularities. 
Let $X=V(f)\subseteq \mathbb{A}_k^n$ be a reduced affine hypersurface with only isolated singularities defining by the reduced polynomial $f\in k[x_1,\ldots,x_n]$. There are two algebras related to the singularities of $X$. Namely, the \textit{Milnor algebra} $M(f)=R/J(f)$ and \textit{Tjurina algebra} $T(f)=R/I(f)$. For each singular point $\fm$,  the local Milnor algebra $M(f)_{\fm}$ is Artinian Gorenstein ring. Note that the Tjurina algebra is the coordinate ring of singular subscheme of $X$. The numbers $\dim_k(M(f)_{\fm})$ and $\dim_k(T(f)_{\fm})$ are called  Milnor and Tjurina number of the hypersurface $X$ at singular point $\fm$. The hypersurface $X$ is called locally Eulerian if the Milnor number is equal the Tjurina number at each singular point.  In section 1, we prove that $X$ is of  Jacobian linear type if and only if $X$ is locally Eulerian hypersurface which is equivalent to say that  the Tjurina algebra is Artinian Gorenstein ring (Theorem \ref{JLT}).

For a projective hypersurface $X=V(f)\subseteq \mathbb{P}_k^n$ with only isolated singularities, the singular subscheme is defined by the gradient ideal $J(f)$. The proposition~\ref*{ProjectiveLT} gives a criterion for linear type property of the Jacobian ideal. More precisely,  a projective hypersurface is of gradient linear type if and only if at each singular prime,  the affine curve is locally Eulerian. Using this criterion, we prove that any reduced projective curve with simple singularities ($\mathrm{ADE}$) is of gradient linear type (Theorem~\ref{ADE}). There is an example of Jacobian linear type plane curve which has non-simple singularity.  Finally, we show that any reduced projective quartic curve is of gradient linear type. There are examples of quintic and sextic plane curves which are not of Jacobian linear type. 
\section{Jacobian linear type affine hypersurface}
Let $R=k[x_1,\ldots,x_n]$ be a polynomial ring over an algebraically closed  field $k$ of characteristic zero. Let  $f\in R$ be a reduced polynomial defining an affine hypersurface $X=V(f)\subset \AA_k^n$ with only isolated singularities. Let 
\[J(f)=(\dfrac{\partial f}{\partial x_1},\ldots,\dfrac{\partial f}{\partial x_n} )\subset R ,\] 
the so-called  gradient ideal  of $f$. The terminology Jacobian ideal is reserved to the  ideal  $I(f)=(f,J(f))$. The singular locus ${\rm Sing}(X)$ of the hypersurface  $X$ is defined by the Jacobian ideal $I(f)$. Since $X$ has only isolated singularities, it follows that ${\rm Sing}(X)$ consists of finitely many points.

Let $f\in R$ be a reduced polynomial. The { Milnor  algebra}  and the { Tjurina algebra }of $f$ are defined by 
\[M(f):=R/J(f)\quad , \quad T(f):=R/I(f). \]
These algebra are related to the singularities of the corresponding reduced hypersurface $X=V(f)$. The following lemma shows that the local Milnor algebra is Artinian Gorenstein. 
\begin{Lemma}\label{Milnor}
	Let $f\in R$ be a reduced polynomial defining a reduce affine hypersurface $X$ with isolated singularities. Then for each singular point $\fm$  the local  Milnor algebra  $M(f)_{\fm}$ is Artinian Gorenstein ring. 
\end{Lemma}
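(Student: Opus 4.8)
The plan is to localize at a singular point $\fm$ and use that $R_\fm$ is a regular, hence Cohen--Macaulay, local ring of dimension $n$. First I would note that the $n$ generators $\partial f/\partial x_1,\dots,\partial f/\partial x_n$ of $J(f)$ all lie in $\fm R_\fm$, since $J(f)\subseteq I(f)\subseteq\fm$ because $\fm$ is a point of $\mathrm{Sing}(X)=V(I(f))$. If one can show that the ideal $J(f)R_\fm$ is $\fm R_\fm$-primary, equivalently $\dim M(f)_\fm=0$, then its height equals $n=\dim R_\fm$; an ideal of height $n$ generated by $n$ elements in a Cohen--Macaulay local ring of dimension $n$ is generated by a regular sequence, so $M(f)_\fm=R_\fm/J(f)R_\fm$ is a zero-dimensional complete intersection, hence Artinian and Gorenstein. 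Thus the whole statement reduces to proving that $M(f)_\fm$ is Artinian.

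To prove $\dim M(f)_\fm=0$ it suffices to show that no positive-dimensional irreducible component $W$ of the critical locus $V(J(f))\subseteq\AA_k^n$ passes through the point $p$ corresponding to $\fm$. Suppose such a $W$ existed. If $W\subseteq V(f)$, then $W\subseteq V(f)\cap V(J(f))=V(I(f))=\mathrm{Sing}(X)$, which is impossible because $\mathrm{Sing}(X)$ is finite while $\dim W\geq 1$. If $W\not\subseteq V(f)$, I would restrict $f$ to the smooth locus $W_{\mathrm{sm}}$, a nonempty connected smooth variety (here the characteristic-zero hypothesis is used). Each $\partial f/\partial x_i$ vanishes identically on $V(J(f))\supseteq W_{\mathrm{sm}}$, so the differential of $f|_{W_{\mathrm{sm}}}$ vanishes; in characteristic zero this forces $f$ to be constant on $W_{\mathrm{sm}}$, hence on $W$, with value $c\neq 0$ because $W\not\subseteq V(f)$. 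But $p\in W\cap V(f)$ gives $c=f(p)=0$, a contradiction. Hence $V(J(f))$ is zero-dimensional at $\fm$ and $M(f)_\fm$ is Artinian, which by the first paragraph finishes the proof.

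The facts invoked at the end---that a height-$n$ ideal generated by $n$ elements in the regular local ring $R_\fm$ is generated by a regular sequence, and that a zero-dimensional complete intersection is Gorenstein---are standard and I would only cite them. The main obstacle is the second case of the dimension count, namely excluding a positive-dimensional component of $V(J(f))$ that is not contained in $X$ but still meets a singular point of $X$. The essential input there is the characteristic-zero fact that a regular function with vanishing differential on a connected smooth variety is constant, together with the elementary observation that a component on which $f$ is a nonzero constant cannot contain a point of $V(f)$. This is in essence the classical equivalence between having an isolated hypersurface singularity and finiteness of the Milnor number.
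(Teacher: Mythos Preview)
Your argument is correct and follows the same logical skeleton as the paper: show that $J(f)R_\fm$ has height $n$ in the regular (hence Cohen--Macaulay) local ring $R_\fm$, deduce that the $n$ partial derivatives form a regular sequence, and conclude that $M(f)_\fm$ is a zero-dimensional complete intersection, hence Artinian Gorenstein. The only substantive difference is that the paper dispatches the height-$n$ claim by citing \cite[Lemma~4]{Michler}, whereas you supply a self-contained geometric argument ruling out positive-dimensional components of $V(J(f))$ through the singular point via the two cases $W\subseteq V(f)$ and $W\not\subseteq V(f)$. Your version is more explicit and makes visible exactly where the characteristic-zero hypothesis enters (the step ``$d(f|_{W_{\mathrm{sm}}})=0$ forces $f$ constant on the irreducible, hence connected, smooth locus''); the paper's citation hides this. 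One small remark: the nonemptiness and connectedness of $W_{\mathrm{sm}}$ hold over any algebraically closed field for an irreducible reduced $W$; the characteristic-zero assumption is needed only for the implication from vanishing differential to local constancy, not for the existence of the smooth locus.
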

\begin{proof}
	The localization of $J(f)$ at singular point $\fm$ has codimension $n$ \cite[Lemma 4]{Michler}. Since $R_{\fm}$ is Cohen-Macaulay ring, it follows that the partial derivatives ${\partial f}/{\partial x_i}$ localized at singular point $\fm$ form a $R_{\fm}$-regular sequence. In particular, the local Milnor algebra  $R_{\fm}/J(f)_{\fm}$ is Artinian Gorenstein ring.
\end{proof}

\begin{Remark}\rm
It is well known that a zero dimensional local ring $(A,\fm) $ is Gorenstein if and only if $\hom_{A}(A/\fm,A)\simeq (0:_{A} \fm)$, the socle of $A$, is cyclic. Note that Tjurina algebra of a reduced hypersurface $X\subseteq \mathbb{A}_k^n$  is the coordinate ring of ${\rm Sing}(X)$. In Theorem (\ref{JLT}) we will characterize affine reduced hypersurfaces that the local Tjurina algebra is Artinian Gorenstein ring.  
\end{Remark}

 The numbers 
\[\mu_{\fm}(f):=\dim_k(R_{\fm}/J(f)_{\fm}) \quad ,\quad \tau_{\fm}(f):=\dim_k(R_{\fm}/I(f)_{\fm})\]
are called the {\it Milnor number} and {\it Tjurina number} of the hypersurface $X=V(f)$ at singular point $\fm$, respectively.  Clearly, for each singular point $\fm\in {\rm Sing}(X) $, one has  $\mu_{\fm}(f)\geq \tau_{\fm}(f)$. 
Recall that a hypersurface $X=V(f)\subset \AA_k^n$ with isolated singularities is called {\it locally Eulerian} if $f\in J(f)_{\fm}$ for every singular point $\fm$. Therefore, $X$ is locally Eulerian if and only if  $\mu_{\fm}(f)=\tau_{\fm}(f)$ for each singular point $\fm$. Recall that a polynomial $f\in R$ is \textit{quasi-homogeneous} of degree $d$ and weight $r_i$ if it is satisfies: $f=\sum_{i=1}^{n}(r_i/d)x_i\partial f/\partial x_i$.  
Clearly, any quasi-homogeneous hypersurface is locally Eulerian but the converse in not true.

Let $I=(f_1,\ldots,f_m)$ be an ideal in the commutative Notherian ring $A$. Let $I$ has the minimal presentation
\[A^s\stackrel{\phi}\lar A^m\lar I\lar 0, \] 
where $\phi$ is the first syzygy matrix of the ideal $I$. The { symmetric algebra  } of $I$ is
\[{\rm Sym}_A(I)\simeq A[T_1,\ldots,T_m]/I_1([T_1\ \ldots\ T_m].\phi)\]
where $I_1([T_1\ \ldots\ T_m].\phi)$ is the ideal generated by entries of the multiplication of the  variable matrix $[T_1\ \ldots\ T_m] $ by the matrix $\phi$. Let $\mathcal{R}_A(I):=A[f_1t,\ldots,f_mt]\subset A[t]$ denote the {\rm Rees algebra} of $I$. Then we have a surjective map of $A$-algebras $\psi:{\rm Sym}_A(I)\lar \mathcal{R}_A(I)$ induced  by mapping $T_i\mapsto f_it$, where $f_i$ is viewed in degree 1. The Rees algebra has more complicated presentation ideal. The generators of these ideals referred to, as the defining ideal of the symmetric and Rees algebra of $I$, respectively.  An ideal $I$ is called { of  linear type} if the $A$-algebra homomorphism $\psi$ is injective. For example, if $I$ is generated by a regular sequence, then $I$ is of linear type.  

\begin{Definition}
A reduced affine hypersurface $X=V(f)\subset\AA_k^n$ is  said to be of {\it Jacobian linear type } if the Jacobian ideal $I(f)$ is of linear type.
\end{Definition}
The non-singular and quasi-homogeneous hypersurfaces with only isolated singularities  are of Jacobian linear type.  The following result characterizes Jacobian linear type hypersurfaces only with isolated singularities. 
\begin{Theorem}\label{JLT}
Let $X=V(f)\subset \AA_k^n$ be a reduced affine hypersurface with only isolated singularities. The  following are equivalent.
\begin{enumerate}
	\item[{\rm (a)}] The hypersurface $X$ is of Jacobian linear type. 
	\item[{\rm (b)}] The Jacobian ideal $I(f)$ is locally a complete intersection at singular points. 
	\item[{\rm (c)}] The hypersurface $X$ is locally Eulerian. 
    \item[{\rm (d)}] The Tjurina algebra $T(f)=R/I(f)$ is an  Artinian Gorenstein ring. 
	\item[{\rm (e)}] The $M(f)$-module $\hom_{M(f)}{\left(T(f),M(f)\right)}\simeq \dfrac{J(f):_Rf}{J(f)}$ is cyclic. 
\end{enumerate} 
\end{Theorem}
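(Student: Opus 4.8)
The plan is to pass to the local rings at the finitely many singular points and run a cycle of implications, treating (e) on the side. Since $\mathrm{Sym}$ and the Rees algebra commute with localization and $I(f)_\fp=R_\fp$ for every prime $\fp\notin\mathrm{Sing}(X)$, the ideal $I(f)$ is of linear type iff $I(f)_\fm$ is for each $\fm\in\mathrm{Sing}(X)$; likewise $T(f)$ is Gorenstein, resp.\ the module in (e) is cyclic, iff the corresponding statement holds at every such $\fm$. So I would fix a singular point $\fm$, where $R_\fm$ is regular local of dimension $n$ and $I(f)_\fm$ is $\fm$-primary, containing the complete intersection $J(f)_\fm$ of the same height $n$ by Lemma~\ref{Milnor}. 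Several arcs are then immediate: if $f\in J(f)_\fm$ then $I(f)_\fm=J(f)_\fm$, so $(c)\Rightarrow(b)$ by Lemma~\ref{Milnor}, and $(c)\Rightarrow(d)$ since $T(f)_\fm=M(f)_\fm$ is then Artinian Gorenstein by the same lemma; $(b)\Rightarrow(a)$ because a complete intersection is of linear type (plus the localization principle above); and $(b)\Rightarrow(d)$ because a complete‑intersection quotient is Gorenstein.

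For the remaining arcs I would argue: $(d)\Rightarrow(b)$ — since $I(f)_\fm=J(f)_\fm+(f)$ is generated by at most $n+1$ elements it is either a complete intersection or an almost complete intersection ($\mu(I(f)_\fm)=n+1$), and by Kunz's theorem an almost complete intersection is never Gorenstein, so $(d)$ forces $I(f)_\fm$ to be a complete intersection. $(a)\Rightarrow(b)$ — if $I(f)_\fm$ is of linear type then its fibre cone is $\mathrm{Sym}_{R_\fm}(I(f)_\fm)\otimes_{R_\fm}k=k[T_1,\dots,T_{\mu(I(f)_\fm)}]$, of Krull dimension $\mu(I(f)_\fm)$; but the fibre cone of an $\fm$‑primary ideal has dimension equal to the analytic spread $\ell(I(f)_\fm)=\dim R_\fm=n$, so $\mu(I(f)_\fm)=n$ and $I(f)_\fm$ is a complete intersection. $(d)\Leftrightarrow(e)$ — one has $\hom_{M(f)}(T(f),M(f))=(J(f):f)/J(f)$ by a direct colon computation, while, because $J(f)_\fm$ is a complete intersection of the same height as the unmixed ideal $I(f)_\fm$, the ideals $I(f)_\fm$ and $(J(f):f)_\fm$ are linked and linkage identifies $(J(f):f)_\fm/J(f)_\fm$ with the canonical module $\omega_{T(f)_\fm}$; an Artinian ring is Gorenstein iff its canonical module is cyclic, which gives $(d)\Leftrightarrow(e)$.

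The hard part will be $(b)\Rightarrow(c)$: that a complete‑intersection Jacobian ideal forces $f\in J(f)_\fm$. If $I(f)_\fm$ is a complete intersection then $\mu(I(f)_\fm)=n$, so among the $n+1$ generators $f,\partial_1 f,\dots,\partial_n f$ there is a syzygy with a unit entry; if that entry is the coefficient of $f$ we are done, so suppose it sits at $\partial_n f$, whence, after rescaling, $\partial_n f-\sum_{i<n}\beta_i\,\partial_i f=\alpha f$ with $\alpha\in\fm$. The derivation $D=\partial_n-\sum_{i<n}\beta_i\partial_i$ does not vanish at $\fm$ (its $\partial_n$‑coefficient is a unit), so, working over the completed (or analytic) local ring and using that the characteristic is zero, the straightening/flow‑box theorem provides coordinates $v_1,\dots,v_n$ at $\fm$ with $D=\partial/\partial v_n$; then $\partial f/\partial v_n=\alpha f$ with $\alpha\in\fm$ integrates to $f=u\cdot f_0(v_1,\dots,v_{n-1})$ with $u=\exp\!\big(\int_0^{v_n}\alpha\,dv_n\big)$ a unit and $f_0$ independent of $v_n$. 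Hence the germ of $X$ at $\fm$ is the cylinder $V(f_0)\times\AA^1_{v_n}$, whose singular locus is $\mathrm{Sing}(V(f_0))\times\AA^1_{v_n}$: this set contains $\fm$ and is either empty or of dimension $\ge 1$, contradicting in the first case that $\fm\in\mathrm{Sing}(X)$ and in the second that $\fm$ is an isolated singular point. So no such syzygy exists and $f\in J(f)_\fm$. With $(b)\Rightarrow(c)$ in hand, the arcs above close up the equivalences $(a)\Leftrightarrow(b)\Leftrightarrow(c)\Leftrightarrow(d)\Leftrightarrow(e)$. The step demanding the most care is precisely this last one: checking that the straightening theorem is legitimately available in the formal/analytic category and that the reducedness of $f$ and the isolatedness of the singularity are genuinely what rule out the cylinder.
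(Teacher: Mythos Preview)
Your argument is correct, and for the pivot $(c)\Leftrightarrow(d)$ (via Kunz's theorem on almost complete intersections) and for $(d)\Leftrightarrow(e)$ (via the identification of $(J(f):_Rf)/J(f)$ with the canonical module of $T(f)_\fm$) it coincides with the paper's own proof; your linkage phrasing is just a repackaging of the same Hom computation the paper draws from Eisenbud. Where you genuinely diverge is on the block $(a)\Leftrightarrow(b)\Leftrightarrow(c)$: the paper does not prove these implications but cites \cite[Theorem~1.1]{Abbas} as a black box, whereas you supply direct arguments. Your $(a)\Rightarrow(b)$ via the analytic spread identity $\mu(I(f)_\fm)=\dim\bigl(\sym_{R_\fm}(I(f)_\fm)\otimes k\bigr)=\ell(I(f)_\fm)=n$ is the standard route and is fine. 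Your $(b)\Rightarrow(c)$ is the interesting novelty: rather than invoking the cited reference, you observe that a non-unit coefficient on $f$ in the redundant syzygy produces a nowhere-vanishing derivation $D$ with $Df=\alpha f$, $\alpha\in\fm$, then pass to the completion and straighten $D$ to $\partial/\partial v_n$ (legitimate in characteristic zero in the formal category), integrate to $f=u\cdot f_0(v_1,\dots,v_{n-1})$, and read off a one-parameter family in the singular locus, contradicting isolatedness. This is a clean geometric alternative to whatever is in the cited paper; the payoff is a fully self-contained proof of the theorem at the cost of the analytic/formal digression. One cosmetic point: in your cylinder step the case ``$\mathrm{Sing}(V(f_0))$ empty'' cannot actually occur, since $f_0(0)=0$ and $\partial_{v_i}f_0(0)=\partial_{v_i}f(0)=0$ for $i<n$ force the origin into $\mathrm{Sing}(V(f_0))$; only the positive-dimensional contradiction is live, and reducedness of $f$ plays no direct role there beyond what is already absorbed into Lemma~\ref{Milnor}.
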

\begin{proof}
The equivalence of  (a), (b) and (c) was proved in \cite[Theorem 1.1]{Abbas}. Now we prove the equivalence of (c) with (d). Assume that  $X=V(f)\subset \AA_k^n$ is locally Eulerian. Thus $\mu_{\fm}(f)=\tau_{\fm}(f)$ for each singular point $\fm$. Hence the local Tjurina algebra $T(f)_{\fm}$ is equal to local Milnor algebra $M(f)_{\fm}$ and the latter is an Artinian Gorenstein ring by Lemma ~(\ref{Milnor}), which implies the assertion. The ring $T(f)$ is Gorenstein if and only if the local Tjurina algebra $T(f)_{\fm}=R_{\fm}/I(f)_{\fm}$ is  Gorenstein. Since $\dim R_{\fm}=n$ and $\dim T(f)_{\fm}=0$, Krull's prime ideal Theorem implies that 
\[n\leq\mu{(I(f)_{\fm})}\leq n+1, \]  
where $\mu(-)$ denotes the minimal number of generators of an ideal in a local ring. If $\mu{(I(f)_{\fm})}=n$, then the ideal $I(f)_{\fm}$ is a complete intersection, hence a Gorenstein local ring.   Therefore, in view of equivalence (b) with (c), the proof of (d) $\Rightarrow$ (c) will be completed, if we prove that $T(f)_{\fm}$ is not Gorenstein, if $\mu{(I(f)_{\fm})}=n+1$. In this case, $I(f)_{\fm}$ is an almost complete intersection. 
Then \cite[Corollary 1.2]{Kunz} complete the proof. 

Finally, we prove that  (d) $\Leftrightarrow$ (e). Note that $T(f)_{\fm}$ is a finitely generated $M(f)_{\fm}$-module and $M(f)_{\fm}$ is local Artinian   Gorenstein ring. Let $\omega
_{M(f)_{\fm}}$ and $\omega_{T(f)_{\fm}}$
be the canonical modules of $M(f)_{\fm}$ and $T(f)_{\fm}$, respectively.  For each singular point $\fm$ we have 
\begin{eqnarray}
\nonumber \omega_{T(f)_{\fm}}&\cong& \hom_{M(f)_{\fm}}(T(f)_{\fm},\omega_{T(f)_{\fm}}),\\
\nonumber &\cong &\hom_{M(f)_{\fm}}(T(f)_{\fm},M(f)_{\fm}),\\
\nonumber &\cong & (J(f):_Rf)_{\fm}/J(f)_{\fm},
\end{eqnarray}
where the first isomorphism follow by \cite[Proposition 21.4]{DE} and the second because $M(f)_{\fm}$ is Gorenstein. By \cite[Proposition 21.5]{DE} $T(f)_{\fm}$ is Gorenstein if and only if  $\omega_{T(f)_{\fm}}$ is a cyclic module. 
\end{proof}

\begin{Corollary}
	Let $X=V(f)\subset \AA_k^n$ be a locally Eulerian affine hypersurface with only isolated singularities. Then the Rees algebra  of $I(f)\subset R$ and the symmetric algebra of $I(f)/(f)\subset R/(f)$ are Cohen-Macaulay.   
\end{Corollary}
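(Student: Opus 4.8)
The plan is to prove both Cohen--Macaulayness statements by localizing on $\mathrm{Spec}\,R$, the point being that after localizing at a singular point the relevant quotient ring becomes Artinian, which makes the usual sliding-depth hypothesis automatic. Write $I=I(f)$, $\bar R=R/(f)$ and $\bar I=I/(f)\subseteq\bar R$, so that $\bar I$ is the image of the gradient ideal $J(f)$ in $\bar R$. Since $X$ is locally Eulerian, Theorem~\ref{JLT} provides that $I$ is of linear type, whence $\mathcal R_R(I)=\mathrm{Sym}_R(I)$, and that $I_\fm$ is a complete intersection of height $n$ at each singular point $\fm$. I would begin from the observations that Cohen--Macaulayness of a ring may be checked after localization, and that $\mathcal R_R(-)$ and $\mathrm{Sym}_R(-)$ commute with localization of the base ring: $\mathcal R_R(I)\otimes_RR_\fp\cong\mathcal R_{R_\fp}(I_\fp)$ and $\mathrm{Sym}_{\bar R}(\bar I)\otimes_RR_\fp\cong\mathrm{Sym}_{\bar R_\fp}(\bar I_\fp)$, where $\bar R_\fp=R_\fp/(f)$. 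Hence it suffices to show, for every $\fp\in\mathrm{Spec}\,R$, that $\mathcal R_{R_\fp}(I_\fp)$ and $\mathrm{Sym}_{\bar R_\fp}(\bar I_\fp)$ are Cohen--Macaulay.

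First I would treat the primes $\fp$ that do not contain $I$. For such $\fp$ one has $I_\fp=R_\fp$, and, when $\fp\supseteq(f)$, also $\bar I_\fp=\bar R_\fp$; so the localized algebras are polynomial rings over the Cohen--Macaulay rings $R_\fp$ and $\bar R_\fp$ (and on the symmetric-algebra side the primes $\fp\not\supseteq(f)$ give the zero ring), hence Cohen--Macaulay. Every prime $\fp\supseteq I$ has $V(\fp)\subseteq V(I)=\mathrm{Sing}(X)$, a finite set, so it is a maximal ideal $\fm$ at one of the finitely many singular points, and these remain. At such an $\fm$, on the Rees side $I_\fm$ is generated by a regular sequence in the regular local ring $R_\fm$ by Theorem~\ref{JLT}(b), and the Rees algebra of an ideal generated by a regular sequence in a Cohen--Macaulay ring is Cohen--Macaulay; so $\mathcal R_{R_\fm}(I_\fm)$ is Cohen--Macaulay and the Rees side is done.

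The substantive step is the symmetric algebra at a singular point $\fm$. Put $A=\bar R_\fm=R_\fm/(f)$, which is a Cohen--Macaulay local ring of dimension $n-1$ (as $f$ is a nonzerodivisor on the domain $R_\fm$), and $\fb=\bar I_\fm$, so that $A/\fb=T(f)_\fm$ is Artinian by Theorem~\ref{JLT}; in particular $\mathrm{ht}(\fb)=\mathrm{grade}(\fb)=n-1=\dim A$. Choosing a generating set of $\fb$ of cardinality $m$, each Koszul homology module $H_i$ of this sequence is annihilated by $\fb$, hence is a finite-length $A/\fb$-module; thus $\mathrm{depth}\,H_i=0$ when $H_i\neq0$, and $H_i=0$ for $i>m-\mathrm{grade}(\fb)=m-(n-1)$. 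Consequently the sliding depth inequalities $\mathrm{depth}\,H_i\ge\dim A-m+i$ hold for all $i$: they are vacuous once $H_i=0$, while for $0\le i\le m-(n-1)$ the right-hand side $(n-1)-m+i$ is already $\le0$. So $\fb$ satisfies sliding depth over the Cohen--Macaulay local ring $A$, and by the Herzog--Simis--Vasconcelos theory of approximation complexes of blowup algebras this forces $\mathrm{Sym}_A(\fb)=\mathrm{Sym}_{\bar R_\fm}(\bar I_\fm)$ to be Cohen--Macaulay. Together with the previous paragraph and the reduction above, this completes the proof.

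The hard part, and the only place one must be careful, is the last paragraph: one cannot shortcut it by a linear-type argument for $I(f)/(f)$, because $I(f)/(f)$ need not be of linear type over $R/(f)$ even when $I(f)$ is of linear type over $R$ --- already for an ordinary node the symmetric and Rees algebras of $I(f)/(f)$ genuinely differ, although the symmetric algebra is still Cohen--Macaulay. The argument must therefore pass through the sliding-depth criterion, and what lets it go through with no extra hypothesis is precisely that the locally Eulerian condition makes $T(f)$ --- hence all the Koszul homology appearing above --- a module over an Artinian ring.
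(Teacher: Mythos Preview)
Your proof is correct and takes essentially the same approach as the paper. The paper's version is simply more compressed: it records $\hht I(f)_\fm=\mu(I(f)_\fm)=n$ and $\hht (I(f)/(f))_{\bar\fm}=n-1$, $\mu (I(f)/(f))_{\bar\fm}=n$ at each singular point and invokes \cite[Theorem~10.1]{Trento} in one stroke, whereas you carry out the localization by hand, treat the Rees and symmetric sides separately, and unpack the sliding-depth verification underlying that criterion.
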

\begin{proof}
Since $X$ is locally Eulerian, it follows that $\hht(I(f)_{\fm})=\mu(I(f)_{\fm})=n$, $\hht((I(f)/(f))_{\fm/(f)})=n-1$ and $\mu((I(f)/(f))_{\fm/(f)})=n$ for any singular point $\fm$. Thus we apply the criterion~\cite[Theorem 10.1]{Trento}. 
\end{proof}
A quasi-homogeneous polynomial $f\in R=k[x_1,\ldots,x_n]$ is said to be {\it non-degenerate} if the origin of $\AA_k^n$ is an isolated singular point of $X=V(f)\subseteq \AA_k^n$. We say that a polynomial has {\it order} $d$, if all of its monomials have degree $d$ or higher. Denote by $\nu(f)$ the order of $f$. 

\begin{Definition}
	A polynomial $f\in R=k[x_1,\ldots,x_n]$ is said to be {\it semiquasi-homogeneous } of degree $d$ with weight $r_i$ if it is of the form $f=F+G$, where $F$ is a non-degenerate quasi-homogeneous polynomial of degree $d$ with weight $r_i$ and  G is a polynomial of order $\nu(G)=(d/\nu(f))+1$. 
\end{Definition}
We say that a reduced hypersurface $X\subset\AA_k^n$ is semiquasi-homogeneous hypersurface if $X$ is defined by a semiquasi-homogeneous polynomial. Let $X=V(f)\subset \AA_k^2$ be a reduced curve define by the semiquasi-homogeneous polynomial $f=x^5-y^6 + x^3y^4$ with $F= x^5+y^6$ and $G=x^3y^4$.  Then $X$ is  semiquasi-homogeneous reduce curve with only one singular point at origin. By \cite[Example 2.6]{Abbas} $X$ is not locally Eulerian and hence is not of Jacobian linear type. A computation in \cite{singular} yields that the generators of $I(f)$ are analytically dependent. In the following result we introduce families of  semiquasi-homogeneous hypersurface of Jacobian linear type.

\begin{Proposition}\label{semiquasi}
	Let $X=V(f)\subset \AA_k^2$ be a  reduced  curve defined by a semiquasi-homogeneous $f=F+G$ of degree $d$ with weight $r_1,r_2$. Let $k\geq 1$.  If $F$ is one of the following non-degenerate quasi homogeneous polynomials 
	\[ y^2x-x^{k+2}  \ , \  y^3-x^{3k+1}  \ , \  y^3-x^{3k+2} \ , \ y^2-x^{k+1} \]
	then $X$ is of Jacobian linear type.   
\end{Proposition}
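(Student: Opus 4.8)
The plan is to invoke Theorem~\ref{JLT}: since $X$ is a curve with an isolated singularity (the semiquasi-homogeneous condition forces $F$ non-degenerate, so the origin is an isolated singular point, and after a generic choice of coordinates or by working affine-locally this is the only singular point to worry about), it suffices to prove that $X$ is locally Eulerian at the origin, i.e. that $f \in J(f)_{\fm}$ where $\fm$ is the maximal ideal at the origin. Equivalently, I must show $\mu_{\fm}(f) = \tau_{\fm}(f)$. The standard tool here is that for a semiquasi-homogeneous singularity $f = F + G$, the Milnor number is determined entirely by the quasi-homogeneous leading form: $\mu_{\fm}(f) = \mu_{\fm}(F)$, because the higher-order terms in $G$ do not change the Milnor number (this is classical — the Milnor number is upper semicontinuous and constant along the deformation $F + tG$ when $G$ lies in the Jacobian ideal filtration, which is exactly what the order condition $\nu(G) \geq (d/\nu(f)) + 1$ guarantees).

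First I would reduce to a computation with the four specific leading forms. For each of $F = y^2x - x^{k+2}$, $F = y^3 - x^{3k+1}$, $F = y^3 - x^{3k+2}$, $F = y^2 - x^{k+1}$, I would compute $\mu_{\fm}(F)$ by hand from the quasi-homogeneous weights: if $F$ has degree $d$ and weights $r_1, r_2$, then $\mu_{\fm}(F) = (d/r_1 - 1)(d/r_2 - 1)$ — actually more robustly I would just compute $\dim_k k[x,y]/(F_x, F_y)$ directly since these are monomial-ish Jacobian ideals. Then, for the full $f = F + G$, I would show $J(f)_{\fm} \supseteq J(F)_{\fm}$ up to the same colength, hence $\mu_{\fm}(f) = \mu_{\fm}(F)$, and crucially that $f \in J(f)_{\fm}$: for the quasi-homogeneous part this is the Euler relation $F = \sum (r_i/d) x_i \partial F/\partial x_i \in J(F)$, and for $G$ one checks that the order hypothesis forces $G \in J(f)_{\fm}$ as well, by a Nakayama/filtration argument comparing the weighted order of $G$ against the weighted orders of the generators $\partial f/\partial x_i$. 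This last containment $G \in J(f)_{\fm}$ is the crux: it is what promotes "the Milnor number is unchanged" to "the singularity is actually locally Eulerian", and it is where the precise numerology of the four families enters.

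The main obstacle I anticipate is exactly this last step — verifying $G \in J(f)_{\fm}$, or equivalently $f \in J(f)_{\fm}$, for each of the four families. The subtlety is that $G$ is an \emph{arbitrary} polynomial of the prescribed order, not a fixed one, so the argument has to be uniform over all admissible $G$. I expect one has to use that for these particular $F$ the quasi-homogeneous singularity is not merely isolated but has the property that every monomial of weighted order strictly above the weighted order of $F$ lies in $J(F)$ (a statement about the top of the Milnor algebra $k[x,y]/J(F)$, which is Artinian Gorenstein by Lemma~\ref{Milnor}, so its socle degree is controlled). Concretely, the socle degree of $M(F)_{\fm}$ equals $d - r_1 - r_2$ in weighted degree (for a quasi-homogeneous plane curve singularity), so any polynomial of weighted order $> d - r_1 - r_2$ lies in $J(F)_{\fm}$; the hypothesis $\nu(G) \geq (d/\nu(f)) + 1$ should be shown to imply $G$ has weighted order exceeding this bound. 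Checking that this inequality holds — and handling the deformation from $J(F)$ to $J(f)$ by a standard determinacy/Tougeron-type argument — for each of $A_k$, $D_k$, and the two $E$-type families is the technical heart, though each individual case is a short explicit calculation.

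Once $f \in J(f)_{\fm}$ is established at the (unique) singular point, Theorem~\ref{JLT} (equivalence of (c) and (a)) immediately gives that $X$ is of Jacobian linear type, completing the proof.
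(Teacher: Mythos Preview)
Your overall strategy --- reduce via Theorem~\ref{JLT} to showing $f \in J(f)_{\fm}$, and do this by comparing $J(f)_{\fm}$ with $J(F)_{\fm}$ through a Nakayama/determinacy argument --- is sound and genuinely different from the paper's. The paper never passes through $J(F)$: for each family it writes out $f_x, f_y$, identifies unit factors to simplify the local generators of $J(f)_{\fm}$, and then exhibits explicit $R_{\fm}$-linear combinations producing a monomial ideal (e.g.\ $(x^{k+2},y^3,xy)$ for $F=y^2x-x^{k+2}$, or $(x^{3k},y^2)$ for $F=y^3-x^{3k+1}$) inside $J(f)_{\fm}$, which visibly contains $f$. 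Your route, once set up correctly, is cleaner: show $G_x,G_y\in\fm J(F)_{\fm}$, deduce $J(f)_{\fm}=J(F)_{\fm}$ by an invertible change of generators, then invoke Euler for $F$ and the order hypothesis for $G$.

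There is, however, a genuine gap in the mechanism you propose for the key containment. First, the socle degree of $M(F)_{\fm}$ for a quasi-homogeneous plane curve of weighted degree $d$ and weights $r_1,r_2$ is $2d-2r_1-2r_2$, not $d-r_1-r_2$. Second, and more importantly, the weighted-socle bound is too crude here: for $F=y^3-x^{3k+1}$ with $k\geq 3$, the monomial $x^{3k+2}$ (which can occur in $G_x$) has weighted degree $9k+6$, below the socle degree $12k-2$, so your proposed inequality fails --- yet $x^{3k+2}\in J(F)=(x^{3k},y^2)$ trivially. What actually works, and what must be checked family by family, is the direct statement that every monomial of \emph{ordinary} degree at least $\nu(G)-1$ lies in $\fm J(F)_{\fm}$. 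This holds for all four listed $F$ (a short verification each time), but it is a monomial-ideal membership check, not a weighted-degree bound. The paper's own counterexample $f=x^5-y^6+x^3y^4$ immediately preceding the Proposition is the warning: there $G_y=4x^3y^3\notin\fm J(F)=(x^5,x^4y,xy^5,y^6)$, so the argument genuinely depends on the specific shape of the four families, and your ``short explicit calculation'' must be of this membership type rather than a uniform socle inequality.
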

\begin{proof}
	Let $f=y^2x-x^{k+2}+G(x,y)$, where $G$ has initial degree at least $k+3$. The gradient  ideal is  generated by
	\[ f_x=-(k+2)x^{k+1}+y^2+ G_ x \quad , \quad   f_y=2yx+G_y. \]
	We can write 
	\begin{eqnarray}
	\nonumber G_x&=&x^{k+1}h_0(x,y)+x^{k}y^2h_1(y)+\ldots+xy^{k+1}h_{k}(y)+y^{k+2}h_{k+1}(y),\\
	\nonumber G_y&=&xyg_0(x,y)+x^{k+2}g_1(x)+y^{k+2}g_2(y). 
	\end{eqnarray}
	where $h_0(0,0)=0$, the initial degree of $h_i$ is at least $0$, the initial degree of $g_0$ is at least $k$ and the initial degree of $g_1, g_2$ is at least $0$. One has
	\begin{eqnarray}
	\nonumber f_x&=&x^{k+1}(h_0(x,y)-(k+2))+y^2(1+x^{k}h_1+\ldots +y^{k}h_{k+1}),\\
	\nonumber f_y&=&xy(g_0(x,y)+2)+x^{k+2}g_1(x)+y^{k+2}g_2(y). 
	\end{eqnarray}
	Since the polynomials  $h_0(x,y)-(k+2), g_0(x,y)+2 $ are unit locally at $\fm=(x,y)$, it follows that the  gradient ideal $J(f)$  locally at $\fm$ is generated by 
\[H_1:=x^{k+1}-\alpha y^2P(x,y) \quad,\quad H_2:=xy-\beta (x^{k+2}g_1(x)+y^{k+2}g_2(y)), \]
with $\alpha,\beta $ units and where $P(x,y)= 1+x^{k}h_1+\ldots +y^{k}h_{k+1}$. We claim that $(x^{k+2},y^3,xy)_{\fm}\subseteq J(f)_{\fm}$, which proves that $X=V(f)$ is of Jacobian linear type by the Theorem~(\ref{JLT}). 
The equation 
	\begin{eqnarray}
	\nonumber (-y+\beta g_1x^{k+1}(1+\alpha \beta yPg_1))H_1 +x^k(1+\alpha \beta yPg_1)H_2 =y^3(\alpha P - \beta x^kU(x,y))
	\end{eqnarray}
shows that $y^3\in J(f)_{\fm}$, where $U(x,y)= \alpha ^2\beta xP^2g_1^2+g_2y^{k-1}(1+\alpha \beta yPg_1)$.
Denote by $H_3$ the reminder of the polynomial $H_2$ with respect to the monomial $y^3$. One has 
	\[H_3:=xy-\beta x^{k+2}g_1(x). \] 
We have
\[\beta g_1xH_1 +(1+\alpha \beta yPg_1)H_3 =xy(1-\alpha \beta ^2 Pg_1^2x^{k+1}),\]
which shows that  $xy\in J(f)_{\fm}$. Also using the polynomial $xH_1$, we conclude that  $x^{k+2}\in J(f)_{\fm}$, which complete the proof of the claim. 

\medskip

Let $f=y^3-x^{3k+1}+G(x,y)$, where $G$ has initial degree at least $3k+2$. The gradient  ideal is generated by

\[f_x=-(3k+1)x^{3k}+G_x \quad , \quad  f_y=3y^{2}+G_y. \]

We can write 
\begin{eqnarray}
\nonumber G_x&=&x^{3k}h_1(x,y)+x^{3k-1}y^2h_2(y)+\ldots +xy^{3k}h_{3k}(y)+y^{3k+1}h_{3k+1}(y),\\
\nonumber G_y&=&y^2g_0(x,y)+yx^{3k}g_1(x)+x^{3k+1}g_2(x). 
\end{eqnarray}
where $h_1(0,0)=0, g_0(0,0)=0$, the initial degree of $h_i$ is at least $0$, the initial degree of $g_0$ is at least $3k-1$ and the initial degree of $g_1, g_2$ is at least $0$. One has
\begin{eqnarray}
\nonumber f_x&=&x^{3k}(h_1(x,y)-(3k+1))+x^{3k-1}y^2h_2(y)+\ldots +xy^{3k}h_{3k}(y)+y^{3k+1}h_{3k+1}(y),\\
\nonumber f_y&=&y^2(g_0(x,y)+3)+yx^{3k}g_1(x)+x^{3k+1}g_2(x). 
\end{eqnarray}
Since $h_1(x,y)-(3k+1), g_0(x,y)+3 \not\in \fm $, it follows that the the gradient ideal $J(f)$  locally at $\fm$ is generated by 
\[H_1:=x^{3k}-\alpha y^2P(x,y)\quad,\quad H_2:=y^2-\beta x^{3k}(yg_1(x)+xg_2(x)), \]
where $P(x,y)= x^{3k-1}h_2(y)+\ldots +xy^{3k-2}h_{3k}(y)+y^{3k-1}h_{3k+1}(y)$. 
We have
\begin{center}
	$H_1 +\alpha P(x,y) H_2 = x^{3k}(1-\alpha \beta (yg_1+xg_2)P(x,y)),$
\end{center}
which shows that $J(f)_{\fm}=(x^{3k}, y^2)_{\fm}$. Therefore, $X=V(f)$ is locally Eulerian.

A similar argument as above shows that the semiquasi-homogeneous hypersurfaces defining by the polynomials $f_1=y^2-x^{k+1}$ and $ f_2=y^3-x^{3k+2}$  are of Jacobian linear type. In fact, 
\[J(f_1)_{\fm}=(x^k,y)_{\fm} \quad,\quad J(f_2)_{\fm}=(x^{3k+1},y^2)_{\fm}. \]
\end{proof}
It is natural to ask whether a reduced affine hypersurface with higher dimensional singularities is of Jacobian linear type. We have seen that a quasi-homogeneous hypersurface with isolated singularities is of gradient linear type while for higher dimensional singularities it is not true in general. For instance, consider the polynomial $f=x^3y^2+x^5z+y^4$, which defines  a quasi-homogeneous hypersurface with weight $(2,3,2)$. It is easy to check that  $\dim {\rm Sing}(X)=1$. The Jacobian ideal $I(f)$ localized at singular prime $\fq=(x,y)$ is not a complete intersection. A computation in \cite{singular} yields that the Jacobian ideal is not of linear type. Therefore, for higher dimensional singularities it is not easy to characterize Jacobian linear type hypersurface.

Let $X=V(f)\subseteq \mathbb{A}_k^n$ be an affine hypersurface with higher dimensional singularities. Let $\fq$ be a minimal prime of the Jacobian ideal $I(f)$. Then Milnor and Tjurina algebra of $f$ locally at $\fq$ are Artinian local ring and hence have finite length. More precisely, 
\[\ell (T(f)_{\fq})\leq \ell (M(f)_{\fq}), \]
where $\ell(-)$ denote the length. It looks to pose the following for one  dimensional singularity. 
\begin{Conjecure}
Let $X=V(f)\subseteq \mathbb{A}_k^n$ be an affine hypersurface with one dimensional singularities. The hypersurface $X$ is of Jacobian linear type if and only if for each minimal prime $\fq$ of $I(f)$, $ \ell (T(f)_{\fq})= \ell (M(f)_{\fq})$ and $I(f)_{\fq}$ is a complete intersection.  
\end{Conjecure}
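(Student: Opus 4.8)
The plan is to imitate the proof of Theorem~\ref{JLT}, using that being of linear type is a local property: $I(f)$ is of linear type over $R$ if and only if $I(f)_\fp$ is of linear type over $R_\fp$ for every prime $\fp\supseteq I(f)$. Since $\dim{\rm Sing}(X)=\dim V(I(f))=1$, these primes split into two classes: the minimal primes $\fq$ of $I(f)$ --- the generic points of the components of ${\rm Sing}(X)$, with $\hht\fq=n-1$ for a curve component and $\hht\fq=n$ for an isolated singular point --- and the closed points $\fm$ lying on a one-dimensional component, which have $\hht\fm=n$ but are not minimal over $I(f)$. A preliminary remark makes the displayed length comparison meaningful: in characteristic zero every minimal prime $\fq$ of $I(f)$ is also minimal over $J(f)$, since a prime $\fp$ with $J(f)\subseteq\fp\subsetneq\fq$ would force $d\bar f=0$ in $\Omega_{L/k}$ for $L={\rm Frac}(R/\fp)$, hence $\bar f\in\ker(d\colon L\to\Omega_{L/k})=k$, and since $f\in\fq$ the scalar $\bar f$ lies in $\fq$ and so is $0$, giving $f\in\fp$ against the minimality of $\fq$ over $I(f)$; consequently $M(f)_\fq$ and $T(f)_\fq$ are Artinian.

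I would first settle the minimal primes. If the stated conditions hold, then at a minimal prime $\fq$ the equality $\ell(T(f)_\fq)=\ell(M(f)_\fq)$ forces $J(f)_\fq=I(f)_\fq$, and the complete intersection hypothesis makes it a $\fq R_\fq$-primary ideal of height $\hht\fq=\dim R_\fq$ in the regular, hence Cohen--Macaulay, local ring $R_\fq$; such an ideal is generated by a regular sequence, so it is of linear type (for $\hht\fq=n$ this is exactly Theorem~\ref{JLT}). Conversely, if $I(f)$ is of linear type, localizing at a minimal prime $\fq$ shows that the special fibre ${\rm Sym}(I(f)_\fq)\otimes_{R_\fq}k(\fq)$, which is a polynomial ring in $\mu(I(f)_\fq)$ variables, coincides with the fibre cone of $I(f)_\fq$, of dimension equal to the analytic spread of the $\fq R_\fq$-primary ideal $I(f)_\fq$, namely $\dim R_\fq=\hht\fq$; hence $\mu(I(f)_\fq)=\hht\fq$ and $I(f)_\fq$ is a complete intersection. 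To recover the remaining equality $\ell(T(f)_\fq)=\ell(M(f)_\fq)$, i.e. $f\in J(f)_\fq$, I would pass to the completion $\widehat{R_\fq}$ --- a formal power series ring over $k(\fq)$ on which $f$ induces a power series with an isolated singularity --- and apply the complete-local analogue of Theorem~\ref{JLT} to a generic transversal slice, so that linear type of $I(f)_\fq$ becomes local Eulerianity of the transversal singularity, which is the asserted equality.

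The crux --- and the reason the statement is only conjectural --- is at the closed points $\fm$ on a one-dimensional component, about which the hypotheses say nothing directly and at which $I(f)_\fm$ need not be a complete intersection ($\mu(I(f)_\fm)$ can be as large as $n$, while $\hht I(f)_\fm=n-1$). One cannot simply propagate good behaviour from the generic point, since linear type is not an open condition --- e.g. $x\,\mathfrak m^2\subset k[x,y]$ is a complete intersection at the generic point of its variety but not of linear type at the origin; this is not itself a counterexample (it is not a Jacobian ideal of a reduced polynomial), but it shows the structure of the gradient ideal of a \emph{reduced} hypersurface must be used in an essential way. The plan is to prove that, under the hypotheses, $I(f)$ satisfies $G_\infty$ (i.e. $\mu(I(f)_\fp)\le\hht\fp$ for all $\fp\supseteq I(f)$, which at a closed point $\fm$ means $\mu(I(f)_\fm)\le n$) together with a sliding depth --- or strong Cohen--Macaulay --- property for the Koszul homology of its generators, and then to invoke the Herzog--Simis--Vasconcelos criterion by which $G_\infty$ plus sliding depth implies linear type, in the same spirit as the criterion~\cite[Theorem 10.1]{Trento} already used for the Corollary above. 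I expect the verification of the depth condition to be the main obstacle: it amounts to bounding the depths of the Koszul cycles of the partial derivatives at the embedded closed points, and the hypotheses, which constrain $I(f)$ only at the minimal primes, appear too weak to force this without a genuinely new structural input on Jacobian ideals of reduced hypersurfaces with one-dimensional singular locus.
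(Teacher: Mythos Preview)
The paper offers no proof of this statement: it is posed as a \emph{Conjecture} immediately after the discussion of higher-dimensional singularities, with no argument attached. There is therefore nothing in the paper to compare your attempt against.

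Your write-up is appropriately framed as an outline ``towards a proof'' and you correctly identify where the real difficulty lies. A couple of remarks on the content. Your preliminary observation that minimal primes of $I(f)$ are also minimal over $J(f)$ in characteristic zero is a nice point and the argument is sound. The treatment at minimal primes is largely fine in the ``if'' direction; in the ``only if'' direction, however, your deduction that $I(f)_\fq$ is a complete intersection from linear type is clean, but the step recovering $f\in J(f)_\fq$ via ``a generic transversal slice'' is a genuine hand-wave: even knowing $I(f)_\fq$ is a complete intersection of height $\hht\fq$, it does not follow formally that the subideal $J(f)_\fq$ coincides with it, and the slice argument would need to be made precise (in particular, one must control how $J(f)$ behaves under slicing, not just $I(f)$).

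You are right that the closed points $\fm$ lying on a one-dimensional component are the obstruction, and right that the hypotheses say nothing about them. Your plan to invoke $G_\infty$ plus sliding depth is a reasonable strategy in principle, but as you yourself note, there is no mechanism in sight by which conditions imposed only at minimal primes would force the required depth estimates at embedded closed points. This is exactly why the paper leaves the statement as a conjecture: nobody has supplied that missing structural input, and your outline does not either.
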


\section{Gradient linear type projective hypersurface}
Let $X=V(f)\subseteq \pp_k^n$ be a reduced projective hypersurface  defining by a reduced homogeneous polynomial $f\in R=k[x_0,\ldots,x_n]$ of degree $d\geq 3$. By the Euler formula, the singular subscheme of $X$ is defined by the gradient ideal 
$$J(f)=(\partial f/ \partial x_0, \ldots,\partial f/ \partial x_n) .$$ 

\begin{Definition}
A reduced projective hypersurface $X=V(f)$ is said to be of {\it gradient linear type} if the gradient ideal $J(f)\subseteq R$ is of linear type.   
\end{Definition}
It is well-known that if $X$ is non-singular, then $J(f)$ is generated by a regular sequence and hence is of gradient linear type.  The following question arises.
\begin{Question}\label{GLT-hypersurface}
	Which projective singular  hypersurfaces are of gradient linear type? 
\end{Question}
For hypersurfaces with isolated singularities, we construct a criterion for linear type property of Jacobian ideal. Assume that $X$ has isolated singularities. Then ${\rm Sing}(X)$ consists of finitely many projective points. Let $p\in \pp_k^n$ be a singular point of $X$. By a projective transformation, we may assume that $p=[0:0:\cdots:0:1]$. The ideal of the points $p$ is the prime ideal $\fp=(x_0,\ldots,x_{n-1})$. Consider the affine chart $U_{x_n}=\AA_k^n$ with coordinate ring $A=k[x_0/x_n,\ldots,x_{n-1}/x_n]=k[T_1,\ldots,T_n]$. The equation of $f$ in this affine chart is $F(T_1,\ldots,T_n)=f(x_0,\ldots,x_{n-1},1)$. 
 By  \cite[Lemma 3.1 and Corollary 3.2]{AA} and \cite[Corollary 3.2]{Abbas}, we have the following characterization for gradient linear type hypersurfaces with only isolated singularities:

\begin{Proposition}\label{ProjectiveLT}
Let $X=V(f)\subset \pp_k^n$ be a reduced projective hypersurface with only isolated singularities. The following are equivalent:
	\begin{enumerate}
		\item[\textrm{(a)}] The hypersurface  $X$ is of gradient linear type. 
		\item[\textrm{(b)}] Locally at each singular prime the gradient ideal $J(f)$ is a complete intersection. 
		\item[\textrm{(c)}] If $R^m\stackrel{\varphi}\lar R^{n+1}\lar J(f)\lar 0$ is the minimal presentation of $J(f)$, then the ideal generated by the entries of $\varphi$ has codimension $n+1$. 
		\item[\textrm{(d)}] For each singular prime  $\fp$ such that $x_i\notin \fp$, the Jacobian ideal $I(F)$ in the affine coordinate ring $A=k[x_0/x_i,\ldots,x_n/x_i]$ is a locally complete intersection at $\fq$ where $F=f(x_1,\ldots,x_{n-1},1)$ and $\fq\subset A$ is the maximal ideal associated to $\fp$. 
	\end{enumerate} 
\end{Proposition}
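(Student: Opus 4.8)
The plan is to check the linear type property locally and reduce the local questions to the affine case treated in the previous section. Forming the symmetric and the Rees algebra commutes with localization, so $J(f)$ is of linear type if and only if $J(f)_{\fp}$ is of linear type for every prime $\fp\supseteq J(f)$; since $X$ has only isolated singularities, $V(J(f))$ is a finite set of points of $\pp^n_k$, so the primes of $R$ containing $J(f)$ are the finitely many height-$n$ homogeneous primes $\fp_1,\dots,\fp_s$ of the singular points, together with the irrelevant maximal ideal $\fm$. If $X$ is smooth the partial derivatives form a regular sequence and (a)--(d) hold trivially, and if $\mu(J(f))=n$ then $J(f)$ is a height-$n$ complete intersection, so again all is immediate; hence I may assume $X$ singular and $\mu(J(f))=n+1$, so that $\mu(J(f)_{\fm})=n+1=\hht(\fm)$.

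Next I would set up the affine--projective dictionary. Taking a singular point to be $[0:\cdots:0:1]$, with $\fp=\fp_i=(x_0,\dots,x_{n-1})$, I pass to the chart $U_{x_n}$ with coordinate ring $A=k[T_1,\dots,T_n]$, $T_j=x_j/x_n$, and $F=f(x_0,\dots,x_{n-1},1)$. Dehomogenizing gives $\partial F/\partial T_j=(\partial f/\partial x_j)(T,1)$ for $1\le j\le n$, while the Euler relation $d\,f=\sum_{j=0}^{n}x_j\,\partial f/\partial x_j$ with $d\ne 0$ shows $(\partial f/\partial x_n)(T,1)=dF-\sum_{j=1}^{n}T_j\,\partial F/\partial T_j$; hence the dehomogenization of $J(f)$ on this chart is exactly the affine Jacobian ideal $I(F)=(F,J(F))$, and passing to the chart identifies the local structure of $J(f)$ at $\fp_i$ with that of $I(F)$ at the maximal ideal $\fq_i$ of the point. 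By the local form of Theorem~\ref{JLT} (cf.\ \cite[Corollary~3.2]{Abbas}), for each $i$ the ideal $J(f)_{\fp_i}$ is of linear type if and only if it is a complete intersection, if and only if $V(F)$ is locally Eulerian at $\fq_i$. Since linear type is a local property, this yields (a)$\Rightarrow$(b) and the equivalence (b)$\Leftrightarrow$(d); moreover, because $\mu(J(f)_{\fp_i})=n=\hht(\fp_i)$ exactly when $J(f)_{\fp_i}$ is a complete intersection, condition (b) is equivalent to the number-of-generators condition $G_\infty$, namely $\mu(J(f)_{\fp})\le\hht(\fp)$ for every $\fp\supseteq J(f)$.

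The main remaining point is (b)$\Rightarrow$(a), and the obstacle is precisely that the charts do not see the irrelevant maximal ideal: one must pass from "complete intersection at each $\fp_i$" to "of linear type" globally. I would do this by noting that the gradient ideal \emph{always} satisfies the sliding depth condition, regardless of (b). Indeed, at every prime $\fp\supseteq J(f)$ the ideal $J(f)_{\fp}$ has grade $n$ and is generated by at most $n+1$ elements; replacing the generators $\partial_0 f,\dots,\partial_n f$ by a set of which some $n$ form a regular sequence and using the standard Koszul decomposition one obtains $H_i(K_\bullet(J(f)))_{\fp}=0$ for $i\ge 2$, hence $H_i(K_\bullet(J(f)))=0$ for $i\ge 2$. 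Consequently the tail of $K_\bullet(J(f))$ is a length-$(n-1)$ free resolution of $B_1:=\mathrm{im}(K_2\to K_1)$, so $\mathrm{depth}\,B_1\ge 2$ by Auslander--Buchsbaum; combining this with $\mathrm{depth}\,Z_1\ge 2$ for $Z_1:=\ker(K_1\to K_0)$ (a syzygy module of the torsion-free ideal $J(f)$) and the exact sequence $0\to B_1\to Z_1\to H_1(K_\bullet(J(f)))\to 0$, a local cohomology chase gives $\mathrm{depth}\,H_1(K_\bullet(J(f)))\ge 1$, which is exactly sliding depth in the relevant range. Since (b) provides $G_\infty$ and $R$ is Cohen--Macaulay, the Herzog--Simis--Vasconcelos criterion (acyclicity of the approximation complex) then forces $\mathrm{Sym}_R(J(f))=\mathcal{R}_R(J(f))$, i.e.\ (a). This last passage is the content of \cite[Lemma~3.1 and Corollary~3.2]{AA}.

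Finally, (b)$\Leftrightarrow$(c) is the standard Fitting-ideal reformulation. For the minimal presentation $R^m\stackrel{\varphi}{\lar}R^{n+1}\lar J(f)\lar 0$, the localization $J(f)_{\fp}$ can be generated by fewer than $n+1$ elements exactly when $\varphi$ has a unit entry over $R_{\fp}$, i.e.\ when $I_1(\varphi)\not\subseteq\fp$. The only non-maximal primes containing $J(f)$ are the height-$n$ primes $\fp_i$, and $J(f)_{\fp_i}$ is a complete intersection if and only if it is generated by $n=\hht(\fp_i)$ elements; thus (b) holds if and only if $I_1(\varphi)\not\subseteq\fp_i$ for every $i$, that is if and only if $V(I_1(\varphi))\subseteq\{\fm\}$, i.e.\ $\codim I_1(\varphi)=n+1$, which is (c).
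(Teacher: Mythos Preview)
Your argument is correct and follows the route the paper indicates by its citations: the paper does not supply a proof but refers to \cite[Lemma~3.1 and Corollary~3.2]{AA} and \cite[Corollary~3.2]{Abbas}, and you have essentially unpacked those references. In particular, your verification that $J(f)$ always satisfies sliding depth (via $H_i=0$ for $i\ge 2$ and the depth count $\depth H_1\ge 1$) is precisely the content of \cite[Lemma~3.1]{AA}, and your affine--projective dictionary together with the appeal to Theorem~\ref{JLT} recovers \cite[Corollary~3.2]{AA} and \cite[Corollary~3.2]{Abbas}. The Fitting-ideal argument for (b)$\Leftrightarrow$(c) is the standard one.

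Two small cosmetic points: your indexing on the affine chart is slightly off (you write $T_j=x_j/x_n$ and then ``$1\le j\le n$'' for the partial derivatives, whereas with variables $x_0,\dots,x_n$ the range should be $0\le j\le n-1$), and in the sliding depth paragraph the phrase ``replacing the generators by a set of which some $n$ form a regular sequence'' is unnecessary, since the vanishing $H_i=0$ for $i>\mu(J(f))-\grade J(f)$ is a general depth-sensitivity fact independent of the chosen generators. Neither affects the validity of the proof.
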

\begin{Example}
Let $X\subseteq \pp_k^3$ be a quartic surface defining by the polynomial 
\[f=x^4+y^4+z^4+z^2w^2+xyzw.\]
The surface $X$ has one singular point $[0:0:0:1]$. 
It is easy to check that the vector 
\[(-8y^3z\ ,\  2yz^2w\ ,\ 63zw^3\ ,\ 32x^2y^2-64xyzw+126z^2w^2+63w^4)\]
is a syzygy of the gradient ideal $J(f)$. Some slightly painful but straightforward calculation yields that the ideal generated by Kosuzl syzygies and above syzygy is an $(x,y,z,w)$-primary ideal.  Therefore, Proposition~(\ref{ProjectiveLT})(c) shows that $X$ is of gradient linear type.  
\end{Example}
\begin{Example}
Consider the cubic surface $X=V(f)$ in $\pp_k^3$ defining by the polynomial $f=xzw+x^2y+y^2z-z^3$. The surface $X$ has one singular point $p=[0:0:0:1]$. Denote by $\fm_p=(x,y,z)$ the prime ideal of the point $p$. It is easy to check that $(24yz+5)F\in J(F)_{\fm_p}$, where $F$ is the affine equation of $f$ in the affine coordinate ring $k[x,y,z]$. Since $(24yz+5)\not\in \fm_p$, it follows that the affine curve $V(F)$ is locally Eulerian. Then Proposition~(\ref{ProjectiveLT})(d) proves that the cubic surface $X$ is of gradient linear type.  
\end{Example}
For hypersurfaces with higher dimensional singularities, the question(~\ref{GLT-hypersurface}) will be very difficult. The following examples illustrate the difficulty of the problem.  
\begin{Example}
	\begin{enumerate}
\item[{\rm (a)}] The surface $X=V(f)\subseteq \mathbb{P}_k^3$ defining by the polynomial $f=x^4-xyw^2+zw^3$ is one dimensional singularity. The only minimal prime of $J(f)$  is $\fq=(x,w)$. The gradient ideal locally at $\fq$ is not a complete intersection while the following vectors are the syzygies of $J(f)$
\[(0,2y, 3z,-w)\quad,\quad  (0,w,x,0),\]
which implies that the ideal generated by entries of the first syzygy  matrix of $J(f)$ has codimension $4$. A computation in \cite{singular} yields that the defining ideal of the Rees algebra contains the following polynomial:
\[4xT_2^2-wT_1T_3-yT_3^2. \]
\item[{\rm (b)}]  Let $X\subseteq \pp^3$ be a  reduced surface of degree $4$ defining by the polynomial 
		\[f=x^2z^2+x^2w^2+y^2z^2+z^2w^2.\]
		Then $\mathrm{Sing}(X)$ consists of the two lines $\fp=(x,z)$, $\fq=(z,w)$ and the isolated singular point $\fm=(x,y,w) $. Thus  $\dim \mathrm{Sing} X=1$. The gradient ideal $J(f)$  is a complete intersection locally at minimal primes. In fact, we have 
		$$J(f)_{\fp}=\fp_{\fp}\quad, \quad J(f)_{\fq}=\fq_{\fq} \quad,\quad J(f)_{\fm}=\fm_{\fm}.  $$  
The ideal generated by entries of the first syzygy matrix of $J(f)$ has codimension $4$ and $X$ is of gradient linear type. 
	\end{enumerate}
	
\end{Example}

\subsection{Gradient linear type plane curves}
Let $X=V(f)\subseteq \pp^2$ be a reduced plane curve defining by the homogeneous polynomial $f\in R=k[x,y,z]$ of degree $d\geq 3$.  
Since $X$ is reduced, the singular locus $\mathrm{Sing}(X)$  consists of finitely many points in $\pp_k^2$.  Denote by $\mathrm{m}_p(f)$ the multiplicity of  the singular point at $p\in X$. By a projective transformation, we may assume that $p=[0:0:1]$. Consider the affine chart $U_z=\mathbb{A}_k^2$.  Keeping the same notation, the equation of $f$ in this affine chart is $F(x,y):=f(x,y,1)$.   

Let $p\in\mathrm{Sing}(X)$ be a singular point with $\mathrm{m}_p(f)=2$. The affine curve $X_z:=V(F(x,y))$ is an analytic subset of $\mathbb{A}_k^2$ and $(X_z,(0,0))$ is an analytic set germ. If this set germ is analytically equivalent to the analytic set germ $(V(y^2-x^{k+1}),(0,0))$ for some $k\in \mathbb{N}$, then $p$ is called an $\mathrm{A}_k$ singularity of $X$.  We call the singularity $\mathrm{A}_1$ a \textit{node}, the singularity $\mathrm{A}_2$ a \textit{ simple cusp}, the singularity $\mathrm{A}_3$ a \textit{Tacnode}, the singularity $\mathrm{A}_4$ a \textit{Ramphoid cusp} and the singularity $\mathrm{A}_5$ an \textit{Oscnode}. If $p\in X$ is $\mathrm{A}_k$ singularity  for $k\geq 2$, then the intersection  multiplicity $\mathrm{mult}_p(X,L)=k$, where $L$ is the tangent line at $p$. 

Now assume that $\mathrm{m}_p(f)=3$. If the set germ $(X_z,(0,0))$ is  analytically equivalent to the analytic set germ $(V(y^2x-x^{k+2}),(0,0))$ for $k\geq 1$, then $p$ is called $\mathrm{D}_{k+3}$ singularity.  We call the singularity $\mathrm{D}_4$ a \textit{ordinary triple point} and the singularity $\mathrm{D}_5$ a \textit{Tacnode cusp}. 
If  the set germ $(X_z,(0,0))$ is  analytically equivalent to the analytic set germ $(V(x^{3}-y^4),(0,0))$, then $p$ is called  $\mathrm{E}_6$(\textit{multiplicity $3$ cusp}) singularity. If the set germ $(X_z,(0,0))$ is  analytically equivalent to the analytic set germ $(V(y^{3}-yx^3),(0,0))$, then $p$ is called   $\mathrm{E}_7$ singularity. 
Finally, if  the set germ $(X_z,(0,0))$  analytically equivalent to $(V(x^{3}-y^5),(0,0))$, then $p$ is called   $\mathrm{E}_8$ singularity.  
The singularities $\mathrm{A}_k, \mathrm{D}_k, \mathrm{E}_6, \mathrm{E}_7$ and $\mathrm{E}_8$ are called simple singularities.  

\begin{Theorem}\label{ADE}
Any reduced projective  plane curve with simple singularities is of gradient linear type. 
\end{Theorem}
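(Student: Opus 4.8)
The plan is to reduce the global statement to a purely local condition at each singular point and then read that condition off the normal forms. By Proposition~\ref{ProjectiveLT}, together with the equivalence (b)$\Leftrightarrow$(c) of Theorem~\ref{JLT} applied to the affine curve, $X$ is of gradient linear type if and only if for every singular point $p$ of $X$ the affine curve cut out by the local equation $F$ of $f$ in a chart containing $p$ is locally Eulerian at $p$. After a projective change of coordinates we may take $p=[0:0:1]$ and work in $U_z$ with $F(x,y)=f(x,y,1)$, so the condition to be checked is $F\in J(F)_{\fm}$, equivalently $\mu_{\fm}(F)=\tau_{\fm}(F)$, where $\fm$ is the maximal ideal of $p$ in $k[x,y]$. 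Since $X$ is reduced, ${\rm Sing}(X)$ is finite, so it suffices to verify this one singular point at a time.

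Next I would observe that the condition $\mu_{\fm}(F)=\tau_{\fm}(F)$ depends only on the analytic (equivalently, formal) isomorphism type of the germ $(V(F),p)$. Indeed $\mu_{\fm}(F)$ and $\tau_{\fm}(F)$ are the $k$-dimensions of the finite length modules $R_{\fm}/J(F)_{\fm}$ and $R_{\fm}/I(F)_{\fm}$, hence are unchanged after tensoring with the completion $\widehat{R_{\fm}}$; and since $R_{\fm}\to\widehat{R_{\fm}}$ is faithfully flat, one has $F\in J(F)_{\fm}$ if and only if $F\in J(F)\widehat{R_{\fm}}$. Thus it is enough to check the Eulerian condition for one polynomial (or power series) representing the analytic type of each singular point.

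Then the classification of simple singularities finishes the argument. An $\mathrm{A}_k$, $\mathrm{D}_{k+3}$, $\mathrm{E}_6$, $\mathrm{E}_7$ or $\mathrm{E}_8$ point of $X$ is, by definition, analytically equivalent to the germ at the origin of one of the polynomials
\[ y^2-x^{k+1},\qquad y^2x-x^{k+2},\qquad x^3-y^4,\qquad y^3-yx^3,\qquad x^3-y^5, \]
each of which is quasi-homogeneous and has an isolated singularity at the origin. For a quasi-homogeneous polynomial $g$ of degree $d$ with weights $r_i$ the Euler relation gives $g=\sum_i (r_i/d)\,x_i\,\partial g/\partial x_i\in J(g)$, hence $I(g)=(g)+J(g)=J(g)$, so $M(g)=T(g)$ and in particular $\mu(g)=\tau(g)$. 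By the previous paragraph every singular point of $X$ is therefore locally Eulerian, and Proposition~\ref{ProjectiveLT} then yields that $X$ is of gradient linear type.

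The step I expect to require the most care is the analytic-invariance argument of the second paragraph: one must be precise that passing from $R_{\fm}$ to $\widehat{R_{\fm}}$ changes neither the Milnor and Tjurina numbers nor the membership $F\in J(F)$, and one must rely on Arnold's classification to know that the five polynomials above genuinely exhaust the analytic types of simple plane-curve singularities. The remaining ingredients — the reduction via Proposition~\ref{ProjectiveLT}, the finiteness of ${\rm Sing}(X)$, and the one-line Euler computation for each normal form — are routine. Alternatively, one could bypass the second and third paragraphs by invoking K.~Saito's theorem that an isolated hypersurface singularity satisfies $\mu=\tau$ exactly when it is analytically quasi-homogeneous, combined with Theorem~\ref{JLT}.
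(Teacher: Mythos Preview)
Your argument is correct and takes a genuinely different route from the paper's. The paper proceeds case by case: for each type $\mathrm{A}_k,\mathrm{D}_{k+3},\mathrm{E}_6,\mathrm{E}_7,\mathrm{E}_8$ it writes down the most general \emph{polynomial} $F$ (after a projective change of coordinates) whose germ at the origin has that type, and then verifies by explicit and rather lengthy manipulations in $k[x,y]_{\fm}$ that $F\in J(F)_{\fm}$, in fact exhibiting explicit monomial ideals contained in $J(F)_{\fm}$ (e.g.\ $J(F)_{\fm}=(x^k,y)_{\fm}$ for $\mathrm{A}_k$). You instead observe that $\mu$ and $\tau$ are invariants of the analytic type of the germ, reduce to the quasi-homogeneous normal forms, and finish with the Euler relation; alternatively you invoke Saito's theorem directly.

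Your route is far shorter and more conceptual, but it imports a nontrivial external fact. Note that the justification you give in the second paragraph only shows that $\mu$ and $\tau$ are unchanged upon passing from $R_{\fm}$ to $\widehat{R_{\fm}}$; it does not yet show they are preserved when one replaces $F$ by a contact-equivalent $G$ (i.e.\ under a formal automorphism carrying $V(F)$ to $V(G)$), which is what you actually need and what you rightly flag as the delicate step. That invariance is of course standard (the Tjurina algebra is a contact invariant, and $\mu$ is even a topological invariant of the germ), and Saito's theorem settles everything at once, but the paper's hands-on computations avoid appealing to any of this and stay entirely within elementary polynomial algebra---at the cost of several pages of case analysis.
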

\begin{proof}
Let $X=V(f)\subseteq\pp^2$ be a reduced plane curve defined by the reduced polynomial $f\in R=k[x,y,z]$ of degree $d\geq 3$. Let $p\in \mathbb{P}_k^2$ be a singular point of $X$. Denote by $\fq\subset R$, the prime ideal of the point $p$. By a projective transformation, we may assume that $p=[0:0:1]$ and hence  $\fq=(x,y)$. For  each singular type we prove that the affine equation of $X$ is locally Eulerian and hence of Jacobian linear type. Therefore, Proposition~\ref{ProjectiveLT}(d) completes the proof. 

\medskip

\underline{$\mathrm{A}_k$ singularity:}  The affine equation of $X$ is 
\[F(x,y)=y^2-x^{k+1}+ax^ky+y^2u(x,y)+h(x,y),\]
where $u(x,y)$ is a homogeneous polynomial of degree $k-1$ and  $h(x,y)$ has initial degree $k+2$. The gradient  ideal of $F$ is generated by
\[F_x=-(k+1)x^k+kax^{k-1}y+y^2u_x+h_x\quad , \quad  F_y=2y+ax^k+2yu+ y^2u_y+h_y.  \]
We can write 
\begin{eqnarray}
\nonumber h_x&=&x^{k}h_0(x,y)+x^{k-1}H_1(y)+\ldots +xH_{k-1}(y)+H_k(y),\\
\nonumber h_y&=&yg_0(x,y)+G_1(x),
\end{eqnarray}
where the initial degree of $H_i$ is at least $i+1$, the initial degree of $h_0,g_0$ is at least $1$ and  the initial degree of $G_1(x)$ is at least $k+1$. By the condition, We can write $H_i(y)=yh_i(y)$ and $ G_1(x)=x^kg_1(x)$. One has
\begin{eqnarray}
\nonumber F_x&=& x^k(-(k+1)+h_0(x,y))+yP(x,y),\\
\nonumber F_y&=&y(2+g_0(x,y)+2u+yu_y)+x^k(a+g_1(x)),
\end{eqnarray}
where $P(x,y) = kax^{k-1} +yu_x+x^{k-1}h_1+\ldots + h_k$. Note that when $k = 1$; we have $u(x,y) = 0$. Since the polynomials $h_0(x,y)-(k+1)$ and $2+g_0(x,y)+2u+yu_y$ are unit locally at $\fq=(x,y)$,  it follows that the gradient ideal $J(F)$ is generated by the following polynomials locally at $\fq$
\begin{eqnarray}
\nonumber  T_1& =&x^k+\alpha yP(x,y),\\
\nonumber  T_2 &=&y+\beta x^kQ(x,y),
\end{eqnarray}
with $\alpha,\beta $ unit and $Q(x,y) = a+g_1(x)$. We claim that  $J(F)_{\fq}=(x^k,y)_{\fq}$, which proves that $V(F)$ is locally Eulerian. One inclusion is clear by conditions on initial degrees. 
We get
\begin{eqnarray}
\nonumber T_2-\beta QT_1 &= &y(1-\alpha \beta PQ)\\
\nonumber  T_1-\alpha PT_2 &=& x^k(1-\alpha \beta PQ),
\end{eqnarray}
which proves  that $(x^k,y)\in J(F)_{\fq}$. 
\medskip

\underline{$\mathrm{D}_{k+3}$ singularity:} For $k=1$ the affine equation of $X$ is $F(x,y)=y^2x-x^3+h(x,y)$, where $h(x,y)$ has initial degree at least $4$. The polynomial $F$ is semiquasi-homogeneous. Hence Proposition~\ref{semiquasi} shows that $V(F)$ is of Jacobian linear type. 
For $k\geq 2$ the affine equation of $X$ is 
\[F(x,y)=y^2x-x^{k+2}+yu(x,y)+h(x,y),\]
where $u(x,y)$ is a homogeneous polynomial of degree $k+1$ and $h(x,y)$ has initial degree at least $k+3$. Setting $E(x,y):=yu(x,y)$.  The gradient ideal of $F$ is generated by 
\[F_x=-(k+2)x^{(k+1)}+y^2+E_x+h_x \quad , \quad  F_y=2yx+E_y+h_y. \]
We can write 
\begin{eqnarray}
\nonumber h_x&=&x^{k+1}h_0(x,y)+x^{k}H_1(y)+\ldots +xH_{k}(y)+H_{k+1}(y),\\
\nonumber h_y&=&xyg_0(x,y)+G_1(x)+G_2(y). \\
\nonumber E_x&=&a_1x^{k}y+\ldots +a_{k}xy^{k}+a_{k+1}y^{k+1},\\
\nonumber E_y&=&xym_0(x,y)+m_1x^{k+1}+m_y^{k+1}2, 
\end{eqnarray}
 where  $m_0(0,0)=h_0(0,0)=0$, the initial degree of $H_i(y)$ is at least $i+1$, the initial degree of $g_0$ is at least $k$ and the initial degree of $G_1(x),G_2(y)$ is at least $k+2$. By condition on initial degree, we can write 
 \[H_i(y) = y^{i+1}h_i(y)\quad,\quad G_1(x) = x^{k+2}g_1(x) \quad,\quad G_2(y)= y^{k+2}g_2(y). \]
Setting $D_i=yh_i+a_i\ , \ E_1 = xg_1+m_1\ ,\ E_2=yg_2+m_2$. We have 
\begin{eqnarray}
\nonumber F_x&=&x^{k+1}(h_0(x,y)-(k+2))+x^{k}yD_1(y)+\ldots+xy^{k}D_{k}(y)+y^{k+1}D_{k+1}(y)+y^2,\\
\nonumber F_y&=&xy(g_0(x,y)+m_0(x,y)+2)+x^{k+1}E_1(x)+y^{k+1}E_2(y). 
\end{eqnarray}
Since $h_0(x,y)-(k+2), g_0(x,y)+m_0(x,y)+2 $ are unit locally the ideal $\fq=(x,y)$, it follows that the the gradient ideal $J(F)$  locally at $\fq$ is generated by the polynomials 
\begin{eqnarray}
 \nonumber T_1 &:= &x^{k+1}-\alpha(x^{k}yD_1(y)+y^2P(x,y)),\\
 \nonumber T_2 &:= &xy-\beta (x^{k+1}E_1(x)+y^{k+1}E_2(y)),
\end{eqnarray}
where $P(x,y) = 1+y^{k-1}D_{k+1}(y)+\ldots+ x^{k-1}D_2(y)$ and  $\alpha,\beta$ are units. We claim that $(x^{k+2},y^3,xy^2,x^2y)\subseteq J(F)$ in the ring $k[x,y]_{\fq}$. 
We get $\lambda_1T_1+\lambda_2T_2=y^3(\alpha P+\mathcal{U}(x,y))$, where 
\begin{eqnarray}
\nonumber  \lambda_1 & = & -y+\beta E_1x^{k-2}(x^2+\alpha \beta PE_1xy+\alpha ^2\beta PE_1y^2(\beta PE_1+D_1)),\\
 \nonumber  \lambda_2 &=& x^{k-2}(x^2+\alpha( \beta PE_1- D_1)xy+\alpha ^2\beta^2P^2E_1^2y^2),
\end{eqnarray}
and $\mathcal{U}(x,y)$ is a polynomial in $k[x,y]_{\fq}$. Hence locally at $\fq$ the monomial $y^3$ belongs to the ideal $J(F)$. Denote by $T_3$ the reminder of the polynomial $T_2$ to the monomial $y^3$. One has 
\[  T_3:=xy-\beta x^{k+1}E_1(x). \] 
The following relations show that the monomials $x^2y$ belong to the ideal $J(F)$ locally at $\fq$. 
\begin{eqnarray}
\nonumber   \beta xE_1T_1+(x+\alpha \beta PE_1y)T_3 &=x^2y(1-\alpha \beta x^{k-1}E_1(D_1+\beta PE_1)),  
\end{eqnarray} 
 Finally, using the polynomials  $yT_3,xT_1$, we conclude that $xy^2,x^{k+2}\in J(F)_{\fq}$ which complete the proof of the claim. Therefore, since $F\in(x^{k+2},y^3,xy^2,x^2y)\subseteq J(F)_{\fq}$, it follows that $F$ is locally Eulerian for $k\geq 2$.
\\

 \medskip

 \underline{$\mathrm{E}_6$ singularity:}
  The affine equation of $X$ is 
 \[ F(x,y)=x^3+y^4+ax^3y+bx^2y^2+cx^4+h(x,y),\]
 where  $h(x,y)$ has initial degree at least $5$. 
The gradient ideal of $F$ is generated by 
\[F_x =3x^2+3ax^2y+2bxy^2+4cx^3+h_x\quad ,\quad  F_y=4y^3+ax^3+2bx^2y+h_y.\]
We can write 
\begin{eqnarray}
\nonumber h_x&=&x^2h_1(x,y)+xH_2(y)+H_3(y),\\
\nonumber  h_y&=&y^3g_1(x,y)+y^2G_2(x)+yG_3(x)+G_4(x),
\end{eqnarray}
where $g_1(0,0)=h_1(0,0)=0$ and  the initial degree of $H_i(y)$ and $G_i(x)$ is at least $i+1$ and $i$, respectively. By conditions on the initial degree, we have  $H_i(y)=y^{i+1}h_i(y), G_i(x)=x^ig_i(x)$. Thus
\begin{eqnarray}
\nonumber F_x &=& x^2(3+h_1+3ay+4cx)+2bxy^2+xy^3h_2+y^4h_3,\\
\nonumber F_y &=& y^3(g_1+4)+ax^3+2bx^2y+y^2x^2g_2+yx^3g_3+x^4g_4.
\end{eqnarray}
Since the polynomials  $3+h_1+3ay+4cx$ and $g_1+4$ are unite locally at $\fq$,  it follows that the gradient ideal $J(F)$ is generated by the following polynomials locally at $\fq$
\begin{eqnarray}
\nonumber  T_1& =&x^2+\alpha y^2P(x,y),\\
\nonumber  T_2 &=&y^3+\beta x^2Q(x,y),
\end{eqnarray}
where $P(x,y)=2bx+xyh_2+y^2h_3$, $Q(x,y)=ax+2by+y^2g_2+yxg_3+x^2g_4$ and $\alpha,\beta$ are units. The following relation shows that $y^3\in J(F)_{\fq}$. 
\[T_2-\beta(Q(x,y)-2 \alpha by^2(a+g_4x)) T_1 = y^3(1+M(x,y)),\]
where  $M(x,y)\in k[x,y]_{\fq}$ is a polynomial of degree at least one. 
Denote by $T_3$ the reminder of $T_1$ to the monomial $y^3$. We get 
\[ T_3:= x^2+2\alpha bxy^2.\]
We consider two cases. If $b=0$, then $x^2\in J(F)_{\fq}$. If $b\neq 0$, then 
\[(2bx+g_2xy)T_3-2b\alpha yx^2Q(x,y) = x^3(2b +(g_2-2ab\alpha )y-2bg_4\alpha xy-2bg_3\alpha y^2),\]
which proves that $x^3\in J(F)_{\fq}$. Using the polynomial $x^2Q(x,y)$, we conclude that $x^2y$ belongs to $J(F)_{\fq}$. Since $f\in (y^3,x^3,x^2y)\subseteq J(F)_{\fq}$, it follows that $F$ is locally Eulerian. 

\medskip
 
 \underline{$\mathrm{E}_7$ singularity:} The affine equation of $X$ is 
 \[F(x,y)=y^3-yx^{3}+bx^2y^2+cxy^3+dy^4+G(x,y), \]
 where $G$ has initial degree at least $5$. The gradient  ideal  of $F$ is generated by
 \[F_x=-3yx^2+2bxy^2+cy^3+G_x \ , \ F_y=3y^2-x^3+2bx^2y+3cxy^2+4dy^3+G_y. \]
  We can write 
 \begin{eqnarray}
 \nonumber G_x&=&x^4h_1(x)+yx^{2}h_2(x,y)+xy^3h_3(y)+y^4h_4(y),\\
 \nonumber G_y&=&y^2g_0(x,y)+yx^3g_1(x)+x^4g_2(x),
 \end{eqnarray}
 where $h_0(0,0)=0$, the initial degree of $h_i$ is at least 0, the initial degree of $g_0$ is at least $k+1$ and the initial degree of $g_1,g_2$ is at least 0. We have 
\begin{eqnarray}
 \nonumber F_x&=&yx^2(-3+h_2(x,y))+x^4h_1(x)+2bxy^2+y^3P(x,y),\\
  \nonumber F_y &=& x^3(-1+yg_1(x)+xg_2(x))+ y^2Q(x,y)+2bx^2y.
\end{eqnarray}
where $P(x,y):= c+xh_3(y)+yh_4(y)$ and $Q(x,y):= 3+g_0(x,y)+3cx+4dy$. 
Since the polynomials $h_2(x,y)-3$ and $-1+yg_1(x)+xg_2(x)$ are  units locally at $\fq$, the gradient ideal is generated by 
 \[T_1:=yx^2-\alpha (x^4h_1(x)+2bxy^2+y^3P(x,y))\ , \  T_2:=x^3-\beta (2bx^2y+y^2Q(x,y)). \]
 We have $M_1T_1+M_2T_2 = y^3(\beta Q(x,y)+U(x,y))$, where 
 \begin{eqnarray}
 \nonumber  M_1 &= &x+(2\alpha b+\alpha \beta Qh_1-2b\beta )y,\\
 \nonumber  M_2 & =& -y+\alpha h_1x^2+\alpha ^2 h_1(2b+\beta Qh_1)xy+2\alpha ^2b\beta h_1(2b+\beta Qh_1)y^2,
  \end{eqnarray}
  and $U(x,y)\in k[x,y]_{\fq}$ is a polynomial with initial degree at least 1. Since the polynomials $Q(x,y)\not\in \fq$, it follows that $y^3\in J(F)_{\fq}$. 
  Denote by $T_3$ the reminder of $T_1$ to the monomial $y^3$. We have 
  \[N_1T_3+N_2T_2 = x^3y(\beta Q + W(x,y)),\]
  where 
  \[N_1=\beta xQ+\alpha\beta Q(2b+\beta Qh_1)y \ , \ N_2 =(\alpha \beta Qh_1)x^2-2\alpha ^2b(2b+\beta Qh_1)xy,\]
 and  $W(x,y)$ has the initial degree at least 1. Then $x^3y\in J(F)_{\fq}$. 
Assume that $b\neq 0$. Using $yT_2, x^2T_2$, we assert that $x^2y^2,x^5\in J(F)_{\fq}$. Then $f\in (y^3,x^3y,x^2y^2,x^5)\subseteq J(F)_{\fq}$. If $b=0$, then $x^5\in J(F)_{\fq}$ because of $xT_1$. Then $f\in (y^3,x^3y,x^5)\subseteq J(F)_{\fq}$, Therefore, in these cases $F$ is locally Eulerian. 
  
\medskip

\underline{$\mathrm{E}_8$ singularity:}
The affine equation of $X$ is 
\[F(x,y)=x^3+y^5+ax^5+bx^4y+cx^3y^2+dx^2y^3+h(x,y),\]
where  $h(x,y)$ has initial degree at least $6$.  
The gradient ideal of $F$ is generated by 
\[F_x =3x^2+5ax^4+4bx^3y+3cx^2y^2+2dxy^3+h_x\quad ,\quad  F_y=5y^4+bx^4+2cx^3y+3dx^2y^2+h_y.\]
We can write 
\begin{eqnarray}
\nonumber h_x&=&x^2h_1(x,y)+xH_2(y)+H_3(y),\\
\nonumber h_y&=&y^4g_1(x,y)+y^3G_2(x)+y^2G_3(x)+yG_4(x)+G_5(x),
\end{eqnarray}
where $g_1(0,0)=h_1(0,0)=0$.  We can write  $G_i(x) = x^ig_i(x)$ and $H_2(y)= y^4h_2(y), H_3(y)=y^5h_3(y)$. Using these relation, we get 
\begin{eqnarray}
\nonumber F_x &=& x^2(3+h_1+5ax^2+4bxy+3cy^2)+2dxy^3+xy^4h_2+y^5h_3\\
\nonumber F_y &=& y^4(g_1+5)+x^2(bx^2+2cxy+3dy^2+y^3g_2+y^2xg_3+yx^2g_4+x^3g_5).
\end{eqnarray}
Note that $(3+h_1+5ax^2+4bxy+3cy^2)$ and $(g_1+5)$ are unite locally at $\fq$.  We have the following generators for gradient ideal
\begin{eqnarray}
\nonumber  T_1& =&x^2+\alpha y^3P(x,y),\\
\nonumber  T_2 &=&y^4+\beta x^2Q(x,y),
\end{eqnarray}
where $P(x,y) = 2dx+xyh_2+y^2h_3$ and $Q(x,y) = bx^2+2cxy+3dy^2+y^3g_2+y^2xg_3+yx^2g_4+x^3g_5$ . The following equation proves that  $y^4\in J(F)_{\fq}$.
\[T_2+\beta (2\alpha dxy^3(b+xg_5)-Q)T_1 = y^4(1+M(x,y)),\]
where $M(x,y)\in k[x,y]_{\fq}$ has no constant term. Denote by $T_3$ the reminder of $T_1$ to the monomial $y^4$ and  by $T_4$ the reminder of $x^2Q(x,y)$ to the monomial $x^3$. We get 
\[T_3 = x^2+2d\alpha xy^3 \quad, \quad T_4 = x^2y^2(3d+yg_2(x))  \] and 
\[(3d+g_2y)xT_3-2\alpha dyT_4 = x^3(3d+g_2y).\]
When $ d\neq 0$ , we assert that $x^3,x^2y^2 \in J(F)_{\fq}$ therefore ,$f\in (y^4,x^3,x^2y^2) \subseteq J(F)_{\fq}$  and when $ d = 0$, $x^2  \in J(F)_{\fq}$ and $f\in (y^4,x^2) \subseteq J(F)_{\fq}$. This proves that $X$ is locally Eulerian.
\end{proof}

\begin{Remark}\rm 
The converse of Theorem~\ref{ADE} is not true in general. Consider the reduced curve $X=V(y^4z-x^5+x^2y^3)$. The curve $X$ has one singular point $[0:0:1]$ of multiplicity $4$. The gradient ideal  is codimension $2$ perfect ideal with the syzygy matrix 
\[
\begin{bmatrix}
9y^2& 0\\
15x^2-20yz& -y^2\\
80z^2-18xy&3x^2+4yz
\end{bmatrix}
.\]
Hence the ideal generated by entries of the syzygy matrix is $(x,y,z)$-primary ideal. Therefore, $X$ is a curve of gradient linear type while the singularity is not simple.    
\end{Remark}
The following  example illustrate that all singular points should be simple.   
\begin{Example}
Let $X$ be the  sextic curve defined by the polynomial 
\[f=(x^2-y^2)^3-x^2y^2z^2.\]
The curve $X$ has three singular points. Indeed, $$\mathrm{Sing}(X)=\{[1:1:0],[1:-1:0],[0:0:1]\}.$$ 
The first two singular points are cusp and the last one is quadruple point with two repeated tangents. A calculation shows that the gradient ideal $J(f)$ is locally a complete intersection  at prime ideals corresponds to cusp singular points. Take the singular point $p=[0:0:1]$. The affine curve $X=V(F)\subset \mathbb{A}_k^2$ has one singular points at origin where $F(x,y)=(x^2-y^2)^3-x^2y^2\subset k[x,y]$. A simple computation yields that the Milnor number of $f$ at $p$ is $\mu_{\fm_p}(f)=13$ while the Tjurina number is $\tau_{\fm_p}(f)=12$. Hence $F$ is not locally Eulerian which implies that $X$ is not of gradient linear type.  
\end{Example}

Let $X\subseteq \pp^2$ be a reduced singular curve defining by a reduced polynomial $f\in R=k[x,y,z]$ of degree $d\geq 3$. The topology of $X$ is determined by singularities and  {\it geometric genus}. Let $r_p(X)$ be the number of analytic branches of $X$ at singular point $p$. The $\delta$-invariant is defined by the formula 
\begin{equation*}
\delta_p(X):=\dfrac{1}{2}(\mu_p(X)+r_p(X)-1). 
\end{equation*}
The geometric genus formula asserts that, if  $X\subset\pp_k^2$ be an irreducible plane curve of degree $d$, then the genus of $g(X)$ is given by 
\begin{equation}\label{Genus formula}
g(X)=\dfrac{1}{2}(d-1)(d-2)\ -\sum_{p\in\mathrm{Sing}(X)} \delta_p(X).
\end{equation}
The invariants of some singularities of multiplicity $2$ and $3$ described in table \ref{23}. 
\begin{table}
	\caption{The numerical invariant of singularities} 
	\centering 
	\begin{tabular}{l c c c} 
		Type of singularity & $\mu_p$ & $r_p$ & $\delta_p$ \\ [.5ex] 
		\hline 
		$\mathrm{A}_1$ (Node) & 1& 2 &1 \\ 
		$\mathrm{A}_2$	(simple Cusp) & 2& 1 & 1 \\
		$\mathrm{A}_3$	(Tacnode) & 3 & 2 & 2 \\
		$\mathrm{A}_4$	(Ramphoid cusp)  & 4 & 1 &2 \\
		$\mathrm{A}_5$	(Oscnode) & 5 & 2 & 3 \\ 
		$\mathrm{A}_6$ (higher cusp)  & 6 & 1 & 3 \\
		$\mathrm{D}_4$ (Ordinary triple point) & 4 & 3 & 3 \\
		$\mathrm{D}_5$		(Tacnode cusp) & 5 &2 & 3 \\
		$\mathrm{E}_6$ (Multiplicity $3$ cusp)  & 6 & 1 & 3 \\ [1ex] 
	\end{tabular}
	\label{23} 
\end{table}
 
 Let $X=V(f)\subseteq \mathbb{P}_k^2$ be a reduced cubic curve. Then $X$ is of gradient linear type~\cite[Corollary 3.7]{Abbas}. The second author  and A. Simis proved that any rational irreducible quartic curve is of gradient linear type. The proof is based to find a normal form for each class of singularity according to the nature of singularities~\cite[Theorem 3.12]{AA}. 
 \begin{Theorem}
 	Any reduced singular quartic curve is of gradient linear type. 
 \end{Theorem}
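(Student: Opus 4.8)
The plan is to reduce the statement to a finite case analysis over the possible singularity types of a reduced quartic plane curve, exactly parallel to the treatment of simple singularities in Theorem~\ref{ADE}. By Proposition~\ref{ProjectiveLT}(d), it suffices to show that for every singular point $p$ of $X=V(f)\subseteq\pp^2$, with $\fq$ the associated maximal ideal in the relevant affine chart, the affine equation $F$ of $X$ is locally Eulerian at $\fq$, i.e.\ $F\in J(F)_{\fq}$. So the first step is to enumerate which singularities can occur on a reduced quartic: a quartic curve has $\sum_p \delta_p(X)\le 3$ (from the genus formula~\eqref{Genus formula} applied componentwise, since the arithmetic genus of a quartic is $3$), and multiplicity at a singular point is at most $3$. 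This forces every singular point to be one of $\mathrm{A}_1,\mathrm{A}_2,\mathrm{A}_3,\mathrm{A}_4,\mathrm{A}_5,\mathrm{A}_6,\mathrm{D}_4,\mathrm{D}_5,\mathrm{E}_6$, or an ordinary node/cusp-type point; in particular, a reduced quartic with a triple point has that as its only singularity, and a double point has $\delta_p\le 3$.

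Second, for each of these types I would write down the affine normal form of $F$ near $p$ (after a projective change of coordinates sending $p$ to $[0:0:1]$, so $\fq=(x,y)$), including the unavoidable higher-order correction terms coming from the degree-$4$ constraint, and verify $F\in J(F)_{\fq}$. The bulk of this is already done in the proof of Theorem~\ref{ADE} for $\mathrm{A}_k$ ($k\le 6$), $\mathrm{D}_4,\mathrm{D}_5$, and $\mathrm{E}_6$: those arguments show the relevant local gradient ideal is $(x^k,y)_{\fq}$ or contains an explicit $\fq$-primary monomial ideal that in turn contains $F$. The genuinely new cases are the non-simple double points allowed by $\delta_p\le 3$ but not covered by the $\mathrm{A}_k$ series — namely points where the two branches are tangent to high order or where a branch is singular in a non-$\mathrm{A}$ way — but a $\delta_p\le 3$ double point of a quartic is in fact always an $\mathrm{A}_k$ with $k\le 6$ (since $\delta=\lfloor k/2\rfloor$ for $\mathrm{A}_k$ and higher $\delta$ with multiplicity $2$ needs $k\ge 7$), so the double-point analysis is subsumed by the $\mathrm{A}_k$ computation already in the paper. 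For the triple-point cases one must also rule out or handle the non-simple triple points: an ordinary triple point is $\mathrm{D}_4$; a triple point with a tangent cone having a repeated factor gives $\mathrm{D}_5,\mathrm{D}_6,\dots$ or $\mathrm{E}_6,\mathrm{E}_7,\dots$, but on a quartic the $\delta\le 3$ bound again caps which of these can appear, and the quasi-homogeneous ones ($\mathrm{D}_4$, $\mathrm{E}_6$, $x^3-y^3$ type, $x(x-y)(x+y)$ type) are immediately locally Eulerian because a quasi-homogeneous polynomial lies in its own gradient ideal.

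Third, for the remaining triple-point normal forms that are not quasi-homogeneous — a $\mathrm{D}_5$ point, $F=y^2x-x^3+h$ with $h$ of order $\ge 4$, which on a quartic forces $h$ to be a single degree-$4$ term, and similar tightly constrained forms — I would run the same style of explicit ideal manipulation as in Proposition~\ref{semiquasi} and the $\mathrm{D}_{k+3}$ and $\mathrm{E}_7$ cases of Theorem~\ref{ADE}: localize so the unit coefficients of the leading terms become invertible, reduce the two gradient generators to the shape $x^{a}+(\text{units})\cdot(\dots)$ and $xy+(\text{units})\cdot(\dots)$ or $y^b+(\text{units})\cdot(\dots)$, and produce an explicit $\fq$-primary monomial ideal $\mathfrak{a}\subseteq J(F)_{\fq}$ with $F\in\mathfrak{a}$. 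The main obstacle is \emph{organizational rather than conceptual}: one must be certain the list of quartic singularity types is complete (this is where I would invoke the classification of singularities of plane curves of small genus, and the $\delta_p\le 3$ and multiplicity $\le 3$ bounds) and that each type's normal form, \emph{with all the higher-degree terms a degree-$4$ equation actually permits}, is handled — the earlier proofs assume arbitrarily long tails $h$, so they cover the quartic sub-cases a fortiori, but one should check that no quartic singularity type is missing from the ADE-plus-$\mathrm{A}_6$ list. Once completeness of the list is secured, every case either is quasi-homogeneous (hence trivially locally Eulerian) or is already treated verbatim in Theorem~\ref{ADE}, and Proposition~\ref{ProjectiveLT}(d) finishes the proof.
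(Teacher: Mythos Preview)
Your overall strategy --- reduce via Proposition~\ref{ProjectiveLT}(d) to the locally Eulerian condition at each singular point, classify the possible singularity types on a reduced quartic, and invoke Theorem~\ref{ADE} --- is exactly the paper's. The gap is in your enumeration, and it is genuine. The bounds $\sum_p\delta_p(X)\le 3$ and $\mathrm{m}_p(f)\le 3$ that you assert hold only for \emph{irreducible} quartics. For a reduced plane curve of degree $d$ with $r$ components of geometric genera $g_i$ one has $\sum_p\delta_p=\tfrac{(d-1)(d-2)}{2}-\sum_i g_i+(r-1)$; thus two conics give $\sum\delta_p=4$ (so an $\mathrm{A}_7$ point occurs), a conic plus two lines gives $5$, and four lines gives $6$. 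More seriously, four \emph{concurrent} lines meet in a point of multiplicity $4$, so your multiplicity bound fails outright. Your list of admissible singularities therefore omits $\mathrm{A}_7$, $\mathrm{D}_6$, $\mathrm{E}_7$ (harmless omissions, since these are still simple and Theorem~\ref{ADE} covers them) and, crucially, the ordinary quadruple point of four concurrent lines, which is \emph{not} a simple singularity and is not covered by anything in your outline.

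The paper treats the reducible case not through a $\delta$-bound but by a direct geometric case split on the component configuration (irreducible cubic $+$ line, two conics, conic $+$ two lines, four lines), listing the singularity types arising in each. This yields the complete list above and isolates four concurrent lines as the unique non-ADE case; that case is then dispatched in one line: after a projective change of coordinates the curve is $V\bigl(xz(x+z)(x-z)\bigr)$, whose affine equation at the quadruple point is homogeneous of degree $4$ and hence lies in its own gradient ideal by Euler's formula. Repair your enumeration for reducible quartics along these lines and add this single extra case, and your proof becomes the paper's.
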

 \begin{proof}
Let $X\subseteq \pp^2$ be an irreducible quartic curve. 	The genus formula (\ref{Genus formula}) shows that 
\[g(X)=3-\sum_{p\in\mathrm{Sing}(X)} \delta_p(X).\]
Since $g(X)\geq 0$ and $\delta_p(X)\geq 1$ for $p\in\mathrm{Sing}(X)$, it follows that  the quartic $X$ has at most three singular points and $0\leq g(X)\leq 3$. If $g(X)=0$, then we have three cases on the number of singularities. If the number of singularities is 3, then $X$ admit  singularities of type three node, three cusp, one node and two cusp, two node and one cusp. If the number of singularities is $2$, then $X$ admit one node and Tacnode, one cup and Tacnode, one node and  Ramphoid cusp, one cusp and  Ramphoid cusp.  If the number of singularities is $1$, then $X$ has the singularities: Oscnode, $\mathrm{A}_6$ cups and one singular points of multiplicity $3$ ( Ordinary triple point, Tacnode cups and multiplicity $3$ cusp). 

Let $g(X)=1$. Note that  $p\in \mathrm{Sing}(X)$ can not be the singularity of type $\mathrm{A}_k$ for $k\geq 5$, because in such case the intersection multiplicity  $\mathrm{mult}_p(X,L)\geq 5$, and this contradicts the assumption that $\deg(X)=4$. We have two cases on the number of singularities. If the number of singularities is $2$, then $X$ has singularities of type: two nodes, two cusps, one node and one cusp. If $X$ has one singular points, then one has one Tacnode and one Ramphoid cusp. 

If $g(X)=1$, then $X$ has just one singular point $p$ with $\delta_p=1$. Thus, it must be a cusp or a node.

Therefore, any irreducible quartic curve just has simple singularities and Theorem(~\ref{ADE}) proves that $X$ is of gradient linear type. 

Now let $X$ be a reduced and reducible quartic. We have also the genus formula for reducible plane curves. It is not easy to make use of the formula as we have done in the irreducible case. The easiest way to find possible  singularities is from the geometric point of view. Geometrically, a quartic can only reduce into the following four kind of combinations:

{\rm (I).} \textbf{A cubic and a line}. If the cubic is irreducible, then the quartic will be included in the other 3 cases. Therefore, we assume that cubic is irreducible. It is well-known that there are only three types of irreducible cubics, namely, non-singular, nodal and cuspidal. 

If the cubic is non-singular, we get the singularities:
	\begin{itemize}
		\item  Three nodes. 
		\item  One node and one Tacnode. 
        \item  Oscnode. 
		\end{itemize}
If the cubic is Nodal, we have the following singularities:
\begin{itemize}
	\item Four nodes. 
	\item Two nodes and one Tacnode. 
	\item One node and one Oscnode.
	\item One node and one ordinary triple point($\mathrm{D}_4$).
	\item $\mathrm{D}_6$ singularity. 
\end{itemize}
Finally, if the cubic is Cuspidal, one has:
\begin{itemize}
	\item Three nodes and one cusp. 
	\item One node and one Tacnode. 
	\item One node, one Oscnode and one Tacnode.
	\item One cusp and one Oscnode.
	\item One node and one Tacnode cusp ($\mathrm{D}_5$). 
	\item $\mathrm{E}_7$ singularity. 
\end{itemize}
{\rm (II).}\textbf{ Two conics}. Two conics can have at most four intersections. Therefore, according to the number of intersections , we get the singularities:
\begin{itemize}
	\item Four nodes. 
	\item Two nodes and one Tacnode. 
	\item One node and one Oscnode.
	\item Two Tacnodes. 
	\item $\mathrm{A}_7$ singularity. 
\end{itemize}
{\rm (III).} \textbf{A conic and two lines}. Two distinct lines can cut each others at exactly one points and each of them can have at most two intersections with the conic. Thus according the number of intersections of these line with the conic, we get the singularities
\begin{itemize}
	\item Five nodes.
	\item Three nodes and one Tacnode. 
	\item One node and two Tacnodes.
	\item Two nodes and one ordinary triple point($\mathrm{D}_4$). 
	\item  One node and one $\mathrm{D}_6$ singularity. 
\end{itemize}
{\rm (IV).}\textbf{ Four lines}. Every $n$ distinct lines can have at most $\frac{n(n+1)}{2}$ intersections. If the lines are distinct, then four lines can have at most six intersections. Then there are three possibilities:
\begin{itemize}
	\item Four distinct lines (Six nodes).
	\item Three concurrent lines and one other (three nodes  and one  ordinary triple point($\mathrm{D}_4$). 
	\item  Four concurrent lines. 
\end{itemize}
Then any reduced reducible quartic only have simple singularities except the case four concurrent lines. By a projective transformation we may assume that $X=V(xz(x+z)(x-z))$ which is clearly locally Eulerian  at the  singular point $[0:1:0]$. Therefore, Theorem~(\ref{ADE}), complete the proof. 
\end{proof}
 \section*{Acknowledgment} The authors would like to thank Rashid Zaare-Nahandi for reading the manuscript and helpful comments on the subject to make the paper more comprehensive. 
 
\end{document}